\urldef{\urluni}{\url}{http://www.mathematik.uni-kl.de/fuana/}
\urldef{\emailgrothaus}{\url}{grothaus@mathematik.uni-kl.de}
\urldef{\emailvosshall}{\url}{vosshall@mathematik.uni-kl.de}
\def\stackunder#1#2{\mathrel{\mathop{#2}\limits_{#1}}}
\makeatletter\@addtoreset{equation}{section}\makeatother
\theoremstyle{plain}      \newtheorem{theorem}{Theorem}[section]
                          \newtheorem{proposition}[theorem]{Proposition}
													\newtheorem{condition}[theorem]{Condition}
\theoremstyle{remark}     \newtheorem{remark}[theorem]{Remark}
                          \newtheorem{lemma}[theorem]{Lemma}
													\newtheorem{example}[theorem]{Example}
\theoremstyle{definition} \newtheorem{definition}[theorem]{Definition}
\begin{document} 

\newcommand{\grad}{\nabla}
\newcommand{\D}{\partial}
\newcommand{\E}{\mathcal{E}}
\newcommand{\N}{\mathbb{N}}
\newcommand{\R}{\mathbb{R}_{\scriptscriptstyle{\ge 0}}}
\newcommand{\dom}{\mathcal{D}}
\newcommand{\ess}{\operatorname{ess~inf}}
\newcommand{\cem}{\operatorname{\text{\ding{61}}}}
\newcommand{\supp}{\operatorname{\text{supp}}}
\newcommand{\ca}{\operatorname{\text{cap}}}

\begin{titlepage}
\title{\Large Strong Feller property of\\ sticky reflected distorted Brownian motion}
\author{\normalsize\textsc Martin Grothaus \footnote{University of Kaiserslautern, P.O.Box 3049, 67653
Kaiserslautern, Germany.}~\thanks{\emailgrothaus}~\footnotemark[4] 
 \and \normalsize\textsc Robert Voßhall\footnotemark[1]~\thanks{\emailvosshall}~\thanks{\urluni}}
\date{\small\today}
\end{titlepage}
\maketitle

\pagestyle{headings}

\begin{abstract}
Using Girsanov transformations we construct from sticky reflected Brownian motion on $[0,\infty)$ a conservative diffusion on $E:=[0,\infty)^n$, $n \in \mathbb{N}$, and prove that its transition semigroup possesses the strong Feller property for a specified general class of drift functions. By identifying the Dirichlet form of the constructed process, we characterize it as sticky reflected distorted Brownian motion. In particular, the relations of the underlying analytic Dirichlet form methods to the probabilistic methods of random time changes and Girsanov transformations are presented. Our studies are motivated by its applications to the dynamical wetting model with $\delta$-pinning and repulsion.
\\\\
\textbf{Mathematics Subject Classification 2010}. \textit{60K35, 60J50, 60J55,  	60J35 , 82C41.}\\
\textbf{Keywords}: \textit{Sticky reflected distorted Brownian motion, strong Feller properties, Skorokhod decomposition, Wentzell boundary condition, interface models.}
\end{abstract}

\section{Introduction}
In \cite{FGV13} the authors constructed via Dirichlet form techniques a reflected distorted Brownian motion in $E:=[0,\infty)^n$, $n\in\mathbb{N}$, with sticky boundary behavior which solves the system of stochastic differential equations
\begin{align} \label{sde!}
dX_{t}^i=\mathbbm{1}_{(0,\infty)}\big(X^i_t\big)\,\Big(\sqrt{2}\,dB^i_t+\partial_i\ln \varrho \big(X_t\big)\,dt\Big)+\frac{1}{\beta}\,\mathbbm{1}_{\{0\}}\big(X^i_t\big)\,dt,\quad i\in I,
\end{align}
or equivalently
\begin{multline} 
dX_{t}^i=\mathbbm{1}_{(0,\infty)}\big(X^i_t\big)\,\Big(\sqrt{2}\,dB^i_t+\partial_i\ln \varrho \big(X_t\big)\,dt\Big)+d\ell_t^{0,i},\\
\text{with}\quad \ell_t^{0,i}:=\frac{1}{\beta}\int_0^t\mathbbm{1}_{\{0\}}\big(X_{s}^i\big)\,ds,\quad i\in I,
\end{multline}
weakly for quasi every starting point with respect to the underlying Dirichlet form. Here $I:=\{1,\ldots,n\}$, $\beta$ is a real and positive constant and $(B^i_t)_{t\ge 0}$ are one dimensional independent standard Brownian motions, $i\in I$. $\varrho$ is a  continuously differentiable density on $E$ such that for all $B \subset I$, $\varrho$ is almost everywhere positive on $E_+(B)$ with respect to the Lebesgue measure and for all $\varnothing\not=B\subset I$, $\sqrt{\varrho\big|_{E_{+}(B)}}$ is in the Sobolev space of weakly differentiable functions on $E_{+}(B)$, square integrable together with its derivative, where $E_{+}(B):=\{ x \in E \big|~x_i >0 \text{ for all } i \in B, ~x_i=0 \text{ for all } i \in I \setminus B \}$. $\varrho$ continuously differentiable on $E$ implies that the drift part $\nabla \ln \varrho$ is continuous on $\{ \varrho > 0 \}$. Moreover, $\ell_t^{0,i}$ is the central local time of the solution to (\ref{sde!}), i.e., it holds almost surely
\begin{align*}
\ell^{0,i}_t=\frac{1}{\beta}\int_0^t\mathbbm{1}_{\{0\}}\big(X_{s}^i\big)\,ds 
&= \lim_{\varepsilon \downarrow 0} \frac{1}{2 \varepsilon} \int_0^t \mathbbm{1}_{(-\varepsilon,\varepsilon)} \big(X_{s}^i\big)\, d\langle X^i \rangle_{s}.
\end{align*}
A solution to the associated martingale problem is even provided under the weaker assumptions that $\varrho$ is almost everywhere positive, integrable on each set $E_+(B)$ with respect to the Lebesgue measure and that the respective Hamza condition is fulfilled.\\
This kind of stochastic differential equation is strongly related to the sticky Brownian motion on the half-line $[0,\infty)$ (which is occasionally also called Brownian motion with delayed reflection or slowly reflecting Brownian motion). In \cite{EP12} the authors study Brownian motion on $[0,\infty)$ with sticky boundary behavior and provide existence and uniqueness of solutions to the SDE system
\begin{align}\label{equEP12}
\left\{
\begin{array}{l}
dX_t=\frac{1}{2}d\ell_t^{0+}\big(X\big)+\mathbbm{1}_{(0,\infty)}\big(X_t\big)\,dB_t\\
\mathbbm{1}_{\{0\}}\,dt=\frac{1}{\mu}\,d\ell_t^{0+}\big(X\big),
\end{array}
\right.
\end{align}
for reflecting Brownian motion $X$ in $[0,\infty)$ sticky at $0$, where $X:=\big(X_t\big)_{t\ge 0}$ starts at $x\in [0,\infty)$, $\mu\in (0,\infty)$ is a given constant, $\ell^{0+}\big(X\big)$ is the right local time of $X$ at $0$ and $B:=\big(B_t\big)_{t\ge 0}$ is the standard Brownian motion. In particular, H.-J.~Engelbert and G.~Peskir show that the system (\ref{equEP12}) has a jointly unique weak solution and moreover, they prove that the system (\ref{equEP12}) has no strong solution, thus verifying Skorokhod's conjecture of the non-existence of a strong solution in this case. For an outline of the historical evolution in the study of sticky Brownian motion we refer to the references given in \cite{EP12} and also to \cite{KPS10}.\\
In view of the results provided in \cite{EP12}, the construction of a weak solution as given in \cite{FGV13} is the only reasonable one. However, the construction via Dirichlet form techniques has the well-known disadvantage that the constructed process solves the underlying stochastic differential equation only for quasi-every starting point with respect to the underlying Dirichlet form. Hence, in the present paper we construct a transition semigroup by Girsanov transformations and investigate its properties in order to strengthen the results of \cite{FGV13}. In this way, we obtain a diffusion with strong Feller transition function which solves (\ref{sde!}) for {\em every} starting point in the state space $E$ and furthermore, we also show an ergodicity theorem for {\em every} starting point in the state space $E$ under the assumptions on the density given in Condition \ref{conditions}. Moreover, we establish connections between the analytic Dirichlet form construction and classical probabilistic methods. Under an additional condition we even conclude uniqueness of weak solutions to (\ref{sde!}) (see Remark \ref{remunique}).\\
In the theory of Dirichlet forms it is a common approach to use results of the regularity theory of elliptic partial differential equations in order to deduce that the associated resolvent and semigroup admit a certain regularity and thereby, it is possible to construct a pointwise solution to the underlying martingale problem or stochastic differential equation for an explicitly known set of starting points under very weak assumptions on the density $\varrho$. For example, this has recently been realized in case of distorted Brownian motion on $\mathbb{R}^d$, $d\in \mathbb{N}$, in \cite{AKR03}, in case of absorbing distorted Brownian motion on $\Omega \subset \mathbb{R}^d$, $d\in \mathbb{N}$, in \cite{BGS13}, in case of reflecting Brownian motion on Lipschitz domains in \cite{FT96} and in case of reflecting distorted Brownian motion on $\Omega \subset \mathbb{R}^d$, $d\in \mathbb{N}$, under some smoothness condition on the boundary $\partial \Omega$ in \cite{FG07} and \cite{BG14}. However, in the present setting which involves not only the Lebesgue measure but also multiple measures on the boundary of the state space $E$ due to the product structure of the problem, the elliptic regularity theory is not yet investigated and from our present point of view the required results are out of reach. For this reason, we choose the probabilistic approach of random time changes and Girsanov transformations in order to obtain a strong Feller transition semigroup which seems to be a new approach in this area.\\
Our results apply to the so-called wetting model with $\delta$-pinning and repulsion (also refered to as the Ginzburg-Landau $\nabla\phi$ interface model with $\delta$-pinning and entropic repulsion) and are of particular interest in view of a scaling limit in dimension $d=1$. More precisely, in a finite volume $\Lambda\subset \mathbb{Z}^d$, $d\in\mathbb{N}$, the scalar field $\boldsymbol{\phi}_t:=\big(\boldsymbol{\phi}_t(x)\big)_{x\in\Lambda}$, $t\ge 0$, is described by the stochastic differential equations
\begin{multline}\label{sde}
d\boldsymbol{\phi}_t(x)=-\mathbbm{1}_{(0,\infty)}\big(\boldsymbol{\phi}_t(x)\big)\sum_{\stackunder{\scriptscriptstyle{|x-y|_{\infty}=1}}{\scriptscriptstyle{y\in\Lambda}}}V'\big(\boldsymbol{\phi}_t(x)-\boldsymbol{\phi}_t(y)\big)\,dt\\
+\mathbbm{1}_{(0,\infty)}\big(\boldsymbol{\phi}_t(x)\big)\sqrt{2}dB_t(x)+d\ell_{t}^{\scriptscriptstyle{0}}(x),\quad x\in\Lambda,
\end{multline}
subject to the conditions:
\begin{align*}
&\boldsymbol{\phi}_t(x)\ge 0,\quad \ell_{t}^{\scriptscriptstyle{0}}(x)\mbox{ is non-decreasing with respect to }t,\quad \ell^{\scriptscriptstyle{0}}_{0}(x)=0,\\
&\int_0^\infty \boldsymbol{\phi}_t(x)\,d\ell_{t}^{\scriptscriptstyle{0}}(x)=0,\\
&\beta \ell_{t}^{\scriptscriptstyle{0}}(x)=\int_0^t\mathbbm{1}_{\{0\}}\big(\boldsymbol{\phi}_s(x)\big)\,ds\quad\mbox{for fixed }\beta> 0,\nonumber\\
\end{align*}
where $\ell_{t}^{\scriptscriptstyle{0}}(x)$ denotes the \emph{local time} of $\boldsymbol{\phi}_t(x)\mbox{ at }0$. Here $|\cdot|_{\infty}$ denotes the sup-norm on $\mathbb{R}^d$, $V\in C^2(\mathbb{R})$ is a symmetric, strictly convex potential and $\big\{(B_t(x))_{t\ge 0}\,|\,x\in\Lambda\big\}$ are independent standard Brownian motions. In dimension $d=2$ this model describes the wetting of a solid surface by a fluid. More details on interface models are e.g. presented in~\cite{Ga02}, \cite{Fu05}.\\
In \cite[Sect.~15.2]{Fu05} J.D.~Deuschel and T.~Funaki investigated (\ref{sde}) and gave reference to classical solution techniques as developed e.g. in \cite{WaIk89}. The methods provided therein require more restrictive assumptions on the drift part than in our situation (e.g. the drift is assumed to be bounded and Lipschitz continuous), moreover, do not apply directly (the geometry and the behavior on the boundary differ). First steps in the direction of applying \cite{WaIk89} are discussed in \cite{Fu05} by J.-D. Deuschel and T. Funaki.\\
As far as we know the only reference that applies to the system of stochastic differential equations (\ref{sde}) is \cite{Gra88}. By means of a suitable choice of the coefficients the system of equations given by \cite[(II.1)]{Gra88} coincides with (\ref{sde}), but amongst others the drift part is also assumed to be Lipschitz continuous and bounded. For this reason, it is not possible to apply the results of \cite{Gra88} to the setting investigated by J.-D.~Deuschel and T.~Funaki, since the potential $V$ naturally causes an unbounded drift (see also Example \ref{example}). Moreover, the results of \cite{Gra88} do not include strong Feller properties of the associated semigroup. \\

Our paper is organized as follows. In Section \ref{secmain} we state the required conditions on the density as well as our main results. In Section \ref{DFtrans} we recall some facts about sticky Brownian motion and present the connections of the Dirichlet form constructed in \cite{FGV13} to classical methods from probability theory. In particular, we establish relations to random time changes and Girsanov transformations. In Section \ref{Feller} a Feller transition semigroup is constructed under the conditions given in Section \ref{secmain}. This semigroup is used to construct a pointwise solution to (\ref{sde!}) and the corresponding Dirichlet form is identified.  Moreover, in Section \ref{appl} the setting is applied to the dynamical wetting model.

\section{Main results}\label{secmain}

In the following we denote by $dx_i$ the one dimensional Lebesgue measure and by $\delta_0^i$ the Dirac measure in $0$, where $i=1,\dots,n$ gives reference to the component of $x=(x_1,\dots, x_n) \in E=[0,\infty)^n$. Define the product measure $dm_{n,\beta}:= \prod_{i=1}^n (dx_i + \beta \delta_0^i)$ on $(E,\mathcal{B}(E))$. For an $m_{n,\beta}$-a.e. defined function $\varphi$ on $E$ and $\emptyset \neq B \subset I$ we wirte $\varphi_{|E_+(B)} \in H^{1,2}(E_+(B))$ if the restriction of $\varphi$ to $E_+(B):=\{ x \in E \big|~x_i >0 \text{ for all } i \in B, ~x_i=0 \text{ for all } i \in I \setminus B \}$ can be identified with an element in the Sobolev space $H^{1,2}((0,\infty)^{|B|})$. Furthermore, we denote by $d_{\text{euc}}$ the Euclidean metric.\\

We state the following proposition in order to be able to give afterwards suitable conditions on the density $\varrho$:

\begin{proposition} \label{propindep}
There exists a diffusion process $\mathbb{M}^{n,\beta}=(\Omega,\mathcal{F},(\mathcal{F}_t)_{t \geq 0}, (X_t)_{t \geq 0}, (\mathbb{P}^{n,\beta}_x)_{x \in E})$ (called $n$ independent sticky Brownian motions on $[0,\infty)$) solving the SDE
\begin{align*} dX^i_t= \mathbbm{1}_{(0,\infty)}(X^i_t) \sqrt{2} dB^i_t + \frac{1}{\beta}~ \mathbbm{1}_{\{0\}}(X^i_t)dt, \quad i=1,\dots,n,
\end{align*}
for every starting point $x \in E$, where $(B_t)_{t \geq 0}$ is an $n$-dimensional standard Brownian motion, and the transition semigroup $(p_t^{n,\beta})_{t >0}$ of $\mathbb{M}^{n,\beta}$ has the doubly Feller property, i.e., it is a Feller transition semigroup which admits additionally the strong Feller property (see Definition \ref{defDF}). Moreover,
the Dirichlet form associated to $n$ independent sticky Brownian motions on $[0,\infty)$ is given by the conservative, strongly local, strongly regular symmetric Dirichlet form $(\mathcal{E}^{n,\beta},D(\mathcal{E}^{n,\beta}))$, i.e., the closure on $L^2(E;m_{n,\beta})$ of the bilinear form
\[ \mathcal{E}^{n,\beta}(f,g)= \int_E \sum_{i=1}^n \mathbbm{1}_{\{ x_i \neq 0 \}} ~ \partial_i f ~\partial_i g ~ dm_{n,\beta} \ \ \ \text{ for } f,g \in C_c^1(E). \]
\end{proposition}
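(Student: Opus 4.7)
The plan is to exploit the product structure: by independence, the construction and all three properties reduce to the case of a single sticky Brownian motion on $[0,\infty)$ with stickiness parameter $\beta$. One first handles this one-dimensional building block and then tensors up.

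For the one-dimensional case, existence and weak uniqueness of a solution to
\[ dX_t = \mathbbm{1}_{(0,\infty)}(X_t)\sqrt{2}\,dB_t + \beta^{-1}\mathbbm{1}_{\{0\}}(X_t)\,dt \]
are classical and are provided, e.g., by Engelbert--Peskir \cite{EP12} (see also \cite{KPS10}). The transition semigroup $(p^{\beta,1}_t)_{t>0}$ admits an explicit density $k^{\beta,1}_t(x,y)$ with respect to $\mu_1:=dx+\beta\delta_0$ that is bounded and jointly continuous in all variables for $t>0$; both the Feller and the strong Feller property follow from this by dominated convergence. Applying Itô's formula to $f\in C_b^2([0,\infty))$ satisfying the Wentzell boundary condition $f''(0)=\beta^{-1}f'(0^+)$ identifies the generator as the Laplacian on this domain, and an integration by parts against $\mu_1$ yields
\[ \langle -Lf,g\rangle_{\mu_1} \;=\; \int_0^\infty f'(x)g'(x)\,dx \;=\; \int_{[0,\infty)} \mathbbm{1}_{\{x\neq 0\}}\, f'g'\,d\mu_1. \]
Hence the associated Dirichlet form on $L^2([0,\infty);\mu_1)$ is the closure of this bilinear form on the core $C_c^1([0,\infty))$; conservativity, strong locality and (strong) regularity are immediate from the core.

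The $n$-dimensional case then follows by a tensor-product argument. Define $\mathbb{M}$ as the product of $n$ independent copies of the one-dimensional process, so the transition semigroup is $p^{\beta,n}_t = \bigotimes_{i=1}^n p^{\beta,1}_t$ with density $\prod_{i=1}^n k^{\beta,1}_t(x_i,y_i)$, which is jointly continuous in both arguments; this at once gives the doubly Feller property. By the standard tensor-product result for Dirichlet forms of independent symmetric Markov processes, the associated form is the closure on $L^2(E;\mu_n)$ of the algebraic tensor product $\bigotimes_{i=1}^n C_c^1([0,\infty))$ under $\mathcal{E}^n$; since this algebraic tensor product is $\mathcal{E}^n$-dense in $C_c^1(E)$ by standard approximation, the latter is also a core. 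Conservativity, strong locality, and (strong) regularity each pass through the product construction unchanged.

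The only genuinely analytic step is the strong Feller property in dimension one: it rests on the classical explicit Laplace-transform formulas for sticky Brownian motion available in \cite{EP12,KPS10}, which produce a sufficiently regular transition density. Once this is in place, the $n$-dimensional doubly Feller property and the Dirichlet form identification are either formal tensor-product statements or routine core arguments, so this one-dimensional regularity estimate is where the real work sits.
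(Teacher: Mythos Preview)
Your approach is essentially the one the paper takes: reduce to the one-dimensional sticky Brownian motion, use the explicit transition kernel from \cite{KPS10} to get the doubly Feller property, and then tensor up using the product Dirichlet form construction (the paper cites \cite{BH91} for this and proves the core property of $C_c^1(E)$ in Lemma~\ref{lemdense}).

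Two small points are worth flagging. First, the paper identifies the one-dimensional Dirichlet form not via It\^o's formula and integration by parts but by viewing sticky Brownian motion as a random time change of reflecting Brownian motion by $A_t=t+\beta L_t^{0+}$ and invoking the time-change theory of \cite[Chapter~5]{ChFu11}; your generator-plus-integration-by-parts route is a valid alternative and arguably more direct once one has the Wentzell domain in hand. Second, your claim that the transition density with respect to $\mu_1=dx+\beta\delta_0$ is \emph{jointly} continuous is not quite right: from the explicit formula \eqref{semigroup} the Lebesgue-part density at $y=0^+$ equals $2\,g_{0,\sqrt{2}\beta}(t,x)$ while the atom carries $\sqrt{2}\,g_{0,\sqrt{2}\beta}(t,x)$, so there is a jump in $y$ at the origin. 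This does not affect the strong Feller argument, which only needs continuity in $x$ together with an integrable majorant---precisely what the paper's Proposition~\ref{1ddoublyfeller} supplies via the erfc bound---but you should phrase the hypothesis accordingly.
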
 

\begin{condition} \label{conditions}
$\varrho=\phi^2$ is strictly positive and bounded such that $\varrho \in C^1(E)$ and
\begin{align} \phi_{|E_+(B)}= \sqrt{\varrho_{|E_+(B)}} \in H^{1,2}(E_+(B)) \quad \text{for every } \emptyset \neq B \subset I.
\end{align}
Moreover, for every $t >0$ and every compact set $D \subset E$ it holds 
\begin{align} \lim_{k \rightarrow \infty} \sup_{x \in D} ~\mathbb{E}^{n,\beta}_x(\mathbbm{1}_{\{ \tau_k \leq t \}}~Z_t) =0, \label{condZ}
\end{align}
where $(Z_t)_{t \geq 0}$ is given by
\[ Z_t=\exp\big(\sqrt{2} \sum_{i=1}^n \int_0^t \partial_i \ln \phi(X_s) \mathbbm{1}_{(0,\infty)}(X_s^i) dB_s^i - \sum_{i=1}^n \int_0^t (\partial_i \ln \phi(X_s))^2 \mathbbm{1}_{(0,\infty)}(X_s^i) ds\big) \]
and $\tau_k:= \inf \{ t>0|~ X_t \notin [0,k)^n \}$ with $(X_t)_{t \geq 0}$ and $(B_t)_{t \geq 0}$ as stated in Proposition \ref{propindep}.
\end{condition}

Under the above assumptions on $\varrho$ it holds:

\begin{theorem} \label{thmmain1}
There exits a conservative diffusion process 
\[ \mathbb{M}^{n,\beta,\varrho}=(\Omega,\mathcal{F},(\mathcal{F}_t)_{t \geq 0}, (X_t)_{t \geq 0}, (\mathbb{P}^{n,\beta,\varrho}_x)_{x \in E}) \] 
on $E$ with strong Feller transition function $(p^{n,\beta,\varrho}_t)_{t \geq 0}$, i.e., $p^{n,\beta,\varrho}_t(\mathcal{B}_b(E)) \subset C_b(E)$, such that the associated Dirichlet form is given by the closure of the symmetric bilinear form $(\mathcal{E}^{n,\beta,\varrho},\mathcal{D})$ on $L^2(E;\varrho m_{n,\beta})$, where
\begin{align*} \mathcal{E}^{n,\beta,\varrho}(f,g)&:= \sum_{\emptyset \neq B \subset \{1,\dots,n\}} \mathcal{E}_B(f,g) \\
&= \int_E \sum_{i=1}^n \mathbbm{1}_{\{ x_i \neq 0 \}} ~ \partial_i f ~\partial_i g ~ \varrho dm_{n,\beta} \ \ \ \text{ for } f,g \in \mathcal{D}:=C_c^1(E) 
\end{align*}
with 
\[ \mathcal{E}_B(f,g):= \int_{E} \sum_{i \in B} \partial_i f ~ \partial_i g~\varrho d\lambda_B^{n,\beta}, \]
where $d\lambda_B^{n,\beta}:=\beta^{n-|B|} \prod_{j \in B} dx_j \prod_{j \in B^c} \delta_0^j$. In particular, $(p^{n,\beta,\varrho}_t)_{t \geq 0}$ fulfills the absolute continuity condition \cite[(4.2.9)]{FOT94}, i.e., the transition probabilities $p^{n,\beta,\varrho}_t(x,\cdot)$, $x \in E$, $t>0$, given by $p_t(x,A):=\mathbb{P}^{n,\beta,\varrho}_x(X_t \in A)$, $A \in \mathcal{B}(E)$, are absolutely continuous with respect to $\varrho m_{n,\beta}$.
\end{theorem}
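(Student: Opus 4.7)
The plan is to construct $\mathbb{M}^{\varrho}$ by a Girsanov transformation of the $n$ independent sticky Brownian motions $\mathbb{M}$ from Proposition \ref{propindep}, with Radon--Nikodym density $Z_t$. Under Condition \ref{conditions}, $\nabla \ln \phi$ is locally bounded, so the stopped exponential $Z_{t \wedge \tau_k}$ is a true $\mathbb{P}_x$-martingale for each $k$ by Novikov's criterion. Condition (\ref{condZ}) then upgrades this to $\mathbb{E}_x[Z_t] = 1$ for every $x \in E$, since $\mathbb{E}_x[Z_t] = \mathbb{E}_x[Z_{t \wedge \tau_k}] + \mathbb{E}_x[\mathbbm{1}_{\{\tau_k \leq t\}}(Z_t - Z_{t\wedge\tau_k})]$ and the second term vanishes as $k \to \infty$. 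Consequently, $d\mathbb{P}^{\varrho}_x|_{\mathcal{F}_t} := Z_t \, d\mathbb{P}_x|_{\mathcal{F}_t}$ is a consistent family, the Markov property transfers from $\mathbb{P}_x$ to $\mathbb{P}^{\varrho}_x$ in the standard way, and one obtains a conservative diffusion with transition semigroup $p_t f(x) := \mathbb{E}_x[Z_t f(X_t)]$.

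The strong Feller property is the crux of the theorem and exploits the doubly Feller property of $(p_t^{\beta,n})$ together with the probabilistic results of \cite{CK08}. For $f \in \mathcal{B}_b(E)$ and a compact $D \subset E$, I would split
\[ p_t f(x) = \mathbb{E}_x\bigl[\mathbbm{1}_{\{\tau_k \leq t\}} Z_t f(X_t)\bigr] + \mathbb{E}_x\bigl[\mathbbm{1}_{\{\tau_k > t\}} Z_t f(X_t)\bigr]. \]
The first summand is bounded by $\|f\|_\infty \sup_{x \in D} \mathbb{E}_x[\mathbbm{1}_{\{\tau_k \leq t\}} Z_t]$, which tends to $0$ uniformly on $D$ by (\ref{condZ}). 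On the event $\{\tau_k > t\}$ the Girsanov density depends only on the restriction of $\nabla \ln \phi$ to $[0,k)^n$, where it is bounded; the second summand can then be rewritten in terms of a Feynman--Kac-type perturbation of $(p_t^{\beta,n})$ with bounded potential, so the Chen--Kim framework from \cite{CK08} applied to the doubly Feller base semigroup yields continuity of the second summand in $x \in D$. Taking $k \to \infty$ gives $p_t f \in C_b(E)$.

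To identify the Dirichlet form, I would compute the generator action on $C_c^2(E)$. By It{\^o}'s formula and Girsanov, $\tilde B^i_t := B^i_t - \sqrt{2} \int_0^t \partial_i \ln \phi(X_s) \mathbbm{1}_{(0,\infty)}(X_s^i)\,ds$ is a $\mathbb{P}^{\varrho}_x$-Brownian motion, so $(X_t)$ satisfies (\ref{sde!}) under $\mathbb{P}^{\varrho}_x$. Hence on the interior of each face $E_+(B)$ the generator acts as $\sum_{i \in B}(\partial_i^2 f + \partial_i \ln \varrho \cdot \partial_i f)$, while at the faces $\{x_i = 0\}$ the sticky behaviour is captured by the factor $\beta \delta_0^i$ in $\mu_n$. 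Integration by parts with respect to $\varrho\, \mu_n$, exploiting the product structure $d\mu_n = \prod_i (dx_i + \beta \delta_0^i)$, matches the generator with $-\mathcal{E}^{\varrho}(f,g)$ in the form stated. Symmetry and closability of $(\mathcal{E}^{\varrho},C_c^1(E))$ on $L^2(E;\varrho\mu_n)$ are known from \cite{FGV13} (valid under the weaker assumptions of Theorem \ref{thmimpro}), so by the uniqueness of the Dirichlet form associated to a symmetric Markov process, its closure coincides with the Dirichlet form of $\mathbb{M}^{\varrho}$.

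Finally, the absolute continuity condition \cite[(4.2.9)]{FOT94} follows because $(p_t^{\beta,n})$ already satisfies it (each one-dimensional sticky BM admits an explicit transition density with respect to $dx + \beta \delta_0$), and the Girsanov transformation preserves absolute continuity: $\mathbb{P}^{\varrho}_x(X_t \in A) = \mathbb{E}_x[Z_t \mathbbm{1}_A(X_t)]$ vanishes whenever $\mathbb{P}_x(X_t \in A)$ does, so $p_t(x,\cdot) \ll \mu_n$, and strict positivity of $\varrho$ gives $p_t(x,\cdot) \ll \varrho\mu_n$. I expect the strong Feller step to be the main obstacle, since it depends on a careful import of the probabilistic machinery of \cite{CK08} and on the delicate interplay between the localisation by $\tau_k$ and the tightness enforced by Condition (\ref{condZ}).
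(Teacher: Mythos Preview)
Your strong Feller argument is exactly the one in the paper (Theorem \ref{strongfeller}): localize to $K=[0,k)^n$, note that on $\{t<\tau_k\}$ the exponential $Z_t$ coincides with the exponential built from a truncation $\phi_k$ with $\nabla\ln\phi_k\in L^\infty$, apply \cite{CK08} (Theorem \ref{thmGT} together with Example \ref{exbounded}) to get the doubly Feller property of the killed transformed semigroup $p_t^k$, and then use (\ref{condZ}) to see that $p_t^k f\to p_t f$ uniformly on compacts.

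Where you diverge from the paper is in the remaining pieces. Instead of verifying $\mathbb{E}_x[Z_t]=1$ by hand and then identifying the Dirichlet form via It\^o's formula and integration by parts, the paper invokes the abstract Girsanov-for-Dirichlet-forms machinery of \cite{Ebe96} and \cite[Section 6.3]{FOT94} (summarized in Section \ref{secgirsanov}): this delivers, in one stroke, the existence of the $\varrho\mu_n$-symmetric right process $\mathbb{M}^{\varrho}$ and the identification of its Dirichlet form as the closure of $(\mathcal{E}^{\varrho},\mathcal{D})$; the only extra check is that $\mathcal{D}\cap D(L)$ is dense, which holds since it contains $C_c^2(E)$. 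Conservativeness then comes for free from the conservativeness of the form (proved in \cite{FGV13}), and for absolute continuity the paper just cites \cite[Exercise 4.2.1]{FOT94}, i.e.\ strong Feller implies the absolute continuity condition --- shorter than your route through the explicit $p_t^{\beta,n}$-density. Your hands-on approach would also work, but note that your argument for $\mathbb{E}_x[Z_t]=1$ is circular as written: the ``second term'' $\mathbb{E}_x[\mathbbm{1}_{\{\tau_k\le t\}}(Z_t-Z_{\tau_k})]$ equals $\mathbb{E}_x[\mathbbm{1}_{\{\tau_k\le t\}}Z_t]-\mathbb{E}_x[\mathbbm{1}_{\{\tau_k\le t\}}Z_{\tau_k}]$, and while the first part tends to $0$ by (\ref{condZ}), the second tends to $1-\mathbb{E}_x[Z_t]$ (from $\mathbb{E}_x[Z_{t\wedge\tau_k}]=1$), so the identity collapses to a tautology. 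The paper sidesteps this entirely by reading conservativeness off the Dirichlet form.
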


\begin{theorem} \label{thmmain2}
Let $\mathbb{M}^{n,\beta,\varrho}$ be the diffusion process of Theorem \ref{thmmain1}. It holds for each $i=1,\dots,n$
\begin{align} \label{main} X_t^i=X_0^i + \sqrt{2} \int_0^t \mathbbm{1}_{(0,\infty)}(X_s^i) dB_s^i + \int_0^t \mathbbm{1}_{(0,\infty)}(X_s^i)~ \partial_i \ln \varrho(X_s) ds + \frac{1}{\beta} \int_0^t \mathbbm{1}_{\{0\}}(X_s^i) ds
\end{align}
$\mathbb{P}^{n,\beta,\varrho}_x$-a.s. for every $x \in E$, where $(B_t^i)_{t \geq 0}$, $i=1,\dots,n$, are independent standard Brownian motions. Moreover, it holds
\begin{align} \label{eqnergo} \lim_{t \rightarrow \infty} \frac{1}{t} \int_0^t F(X_s) ds= \frac{\int_E F \varrho dm_{n,\beta}}{\int_E \varrho dm_{n,\beta}} \end{align}
$\mathbb{P}^{n,\beta,\varrho}_x$-a.s. for every $x \in E$ and $F \in L^1(E; \varrho m_{n,\beta})$.
\end{theorem}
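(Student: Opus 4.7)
The plan is to derive the SDE (\ref{main}) by combining the SDE for $n$ independent sticky Brownian motions $\mathbb{M}$ from Proposition \ref{propindep} with the Girsanov transformation used in Section \ref{Feller} to construct $\mathbb{M}^{\varrho}$. Under the transformed law $\mathbb{P}^{\varrho}_x$, with Radon--Nikodym density $Z_t$, Girsanov's theorem implies that the processes
\[ \tilde{B}^i_t := B^i_t - \sqrt{2} \int_0^t \mathbbm{1}_{(0,\infty)}(X_s^i) \, \partial_i \ln \phi(X_s) \, ds, \qquad i = 1, \dots, n, \]
form an $n$-dimensional standard Brownian motion. The genuine martingale property of $(Z_t)$ under each $\mathbb{P}_x$, as opposed to merely a local martingale property, is supplied by condition (\ref{condZ}). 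Substituting $dB^i_s = d\tilde{B}^i_s + \sqrt{2}\, \mathbbm{1}_{(0,\infty)}(X_s^i)\, \partial_i \ln \phi(X_s)\, ds$ into the sticky SDE of Proposition \ref{propindep} and using $2\partial_i \ln \phi = \partial_i \ln \varrho$ yields (\ref{main}). The sticky term $\frac{1}{\beta} \int_0^t \mathbbm{1}_{\{0\}}(X_s^i)\, ds$ is a finite-variation additive functional disjoint from the martingale part of the semimartingale decomposition and is therefore unaffected by the change of measure.

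To obtain (\ref{main}) for \emph{every} $x \in E$ rather than only quasi-everywhere (as in \cite{FGV13}), I rely on the two properties from Theorem \ref{thmmain1}: the strong Feller property and the absolute continuity of $p_t(x,\cdot)$ with respect to $\varrho \mu_n$. Combined with the fact that (\ref{condZ}) is uniform in $x$ on compacts, the Girsanov change of measure is justified pointwise at every starting point, which promotes (\ref{main}) from a quasi-everywhere to a pointwise identity.

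For the ergodic statement (\ref{eqnergo}), the plan is to appeal to the ergodic theorem for symmetric conservative irreducible Dirichlet forms. The form $(\mathcal{E}^{\varrho}, D(\mathcal{E}^{\varrho}))$ is conservative by Theorem \ref{thmmain1}, and its symmetrising probability $\nu := \varrho \mu_n / \int_E \varrho \, d\mu_n$ is finite because $\varrho \in L^1(E; \mu_n)$. I would first verify irreducibility of $(\mathcal{E}^{\varrho}, D(\mathcal{E}^{\varrho}))$ from the strong regularity of the underlying form $(\mathcal{E}^n, D(\mathcal{E}^n))$, connectedness of $E$, and strict positivity of $\varrho$. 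Then \cite[Theorem 4.7.3]{FOT94} gives (\ref{eqnergo}) $\mathbb{P}^{\varrho}_x$-a.s.\ for quasi every $x \in E$ and every $F \in L^1(E; \varrho \mu_n)$. Finally, the absolute continuity condition \cite[(4.2.9)]{FOT94}, combined with the Markov property, upgrades the statement to every $x \in E$ by the standard trick: fix a countable determining family of bounded continuous $F$'s, handle the union of exceptional sets using absolute continuity at an arbitrarily small positive time, and extend to general $F \in L^1$ via a monotone-class argument. I expect the main obstacle to be the pointwise martingale property of $(Z_t)$ under each $\mathbb{P}_x$—this is where (\ref{condZ}) does essential work—and a secondary subtlety is preserving the $F$-uniformity of the exceptional set in the ergodic upgrade, which the determining-family approach circumvents.
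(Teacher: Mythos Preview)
Your proposal is correct, and for the ergodic statement (\ref{eqnergo}) it follows the same route as the paper: the paper's proof is a two-line appeal to \cite[Corollary 4.18, Theorem 5.6]{FGV13}---which establish the SDE and the ergodic limit for quasi-every starting point via the Fukushima decomposition and irreducibility of $(\mathcal{E}^{\varrho},D(\mathcal{E}^{\varrho}))$---together with the absolute continuity condition from Theorem \ref{thmmain1}, which removes the exceptional set.

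For the SDE (\ref{main}), your first paragraph takes a genuinely different route. Rather than invoking the Fukushima decomposition carried out in \cite{FGV13} and then upgrading via absolute continuity, you derive (\ref{main}) directly by applying Girsanov's theorem to the sticky SDE of Proposition \ref{propindep} under each $\mathbb{P}_x$. This is more explicit and avoids the additive-functional machinery; the paper, by contrast, treats the SDE as already proved quasi-everywhere and only needs absolute continuity to finish. Your second paragraph then reverts to the paper's upgrade mechanism, which is redundant with the direct Girsanov argument: if Girsanov applies pointwise, no exceptional set arises. One caution worth flagging: condition (\ref{condZ}) does not literally assert $\mathbb{E}_x(Z_t)=1$. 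What you actually need is that $(Z_t)_{t \geq 0}$ is a true $\mathbb{P}_x$-martingale for each $x$, which is equivalent to conservativeness of $\mathbb{M}^{\varrho}$ starting from $x$; this is asserted in Theorem \ref{thmmain1} and is the cleanest way to close the gap. Deducing it from (\ref{condZ}) alone requires an extra step, since (\ref{condZ}) controls $\mathbb{E}_x(Z_t \mathbbm{1}_{\{\tau_k \leq t\}})$ rather than $\mathbb{E}_x(Z_{\tau_k} \mathbbm{1}_{\{\tau_k \leq t\}})$, and the supermartingale inequality goes the wrong way for a direct comparison.
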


\begin{remark}
Let $\Gamma \subset \partial E$ such that $\int_{\Gamma} \varrho dm_{n,\beta} >0$. Then it follows by (\ref{eqnergo}) that 
\[ \lim_{t \rightarrow \infty} \frac{1}{t} \int_0^t \mathbbm{1}_{\Gamma}(X_s) ds= \frac{\int_{\Gamma}  \varrho dm_{n,\beta}}{\int_E \varrho dm_{n,\beta}} >0 \]
$\mathbb{P}^{n,\beta,\varrho}_x$-a.s. for every $x \in E$. This is for example the case for $\Gamma=\{ x_i=0\}$, $i=1,\dots,n$, and confirms the sticky behavior of the process on the boundary.
\end{remark}

\begin{remark}
\begin{enumerate}
\item All proofs of the following results are still valid if $\varrho$ is not necessarily strictly positive, but Condition \ref{conditions} additionally requires $\text{cap}_{\mathcal{E}^{n,\beta,\varrho}}(\{ \varrho=0\})=0$,
\begin{align} \nabla \ln \phi= \frac{\nabla \phi}{\phi} \in L^{\infty}_{\text{loc}}(E \backslash \{ \varrho=0\};m_{n,\beta}),
\end{align}
$D$ is an arbitrary compact subset of $E \backslash \{ \varrho =0\}$ and $\tau_k$ is defined by
\[ \tau_k:= \inf \{ t>0|~ X_t \notin [0,k)^n \backslash B_{\frac{1}{k}}(\{ \varrho=0\}) \},\]
where $B_{\frac{1}{k}}(\{ \varrho=0\}):=\{ x \in E|~\inf_{y \in \{\varrho=0\}}~ d_{\text{euc}}(x,y) \leq \frac{1}{k} \}$. In this case, a strong Feller process on the state space $E_1:= E \backslash \{ \varrho=0\}$ can be constructed and the corresponding Dirichlet form is defined analogously but on the space $L^2(E_1;\varrho m_{n,\beta})$. The additional condition guarantees that the constructed process (using a Girsanov transformation by $(Z_t)_{t \geq 0}$) never hits the set $\{ \varrho=0\}$. However, in this case it seems hard to verify the conditions for specific densities.
\item Note that Condition \ref{conditions} implies $\phi \in D(\mathcal{E}^{n,\beta})_{\text{loc}} \cap C_b(E)$ and $\ln \phi \in D(\mathcal{E}^{n,\beta})_{\text{loc}}$.
\item Condition \ref{conditions} implies 
\[ \nabla \ln \phi=\frac{\nabla \phi}{\phi} \in L^{\infty}_{\text{loc}}(E;m_{n,\beta}),\] since $\phi$ is assumed to be strictly positive and continuously differentiable.
\item (\ref{condZ}) holds for example if $\sup_{x \in D} \mathbb{E}^{n,\beta}_x(Z_t^p) < \infty$ for some $p>1$ (see Remark \ref{remZ}).
\end{enumerate}
\end{remark}

\begin{remark} Let $\varrho:E \rightarrow (0,\infty)$, $\varrho=\exp(-2H)$, be defined by a potential $V$ with nearest neighbor pair interaction, i.e., $H$ is given by
\begin{align} \label{hamilt} H(x_1,\cdots,x_n)= \frac{1}{4} \sum_{\stackunder{|i-j|=1}{i,j \in \{0,\dots,n+1\}}} V(x_i-x_j),
\end{align}
where $x_0:=x_{n+1}:=0$ and $V:\mathbb{R} \rightarrow [-b,\infty)$, $b \in [0, \infty)$, fulfills the conditions of \cite[(2.2)]{Fu05}:
\begin{enumerate}
\item[(i)] $V \in C^2(\mathbb{R})$,
\item[(ii)] $V$ is symmetric, i.e., $V(r)=V(-r)$ for all $r \in \mathbb{R}$,
\item[(iii)] $V$ is strictly convex, i.e., $c_{-} \leq V^{\prime \prime}(r) \leq c_{+}$ for all $r \in \mathbb{R}$ and some constants $c_{-},c_{+} >0$.
\end{enumerate}
Denote by $\phi:=\sqrt{\varrho}=\exp(-H)$ the square root of $\varrho$.\\

Define $\mathbb{V}^{\prime}(i,x)$ for $i=1,\dots,n$ and $x \in E$ by
\[ \mathbb{V}^{\prime}(i,x):= \sum_{\stackunder{|i-j|=1}{j \in \{0,\dots,n+1\}}} V^{\prime}(x_i-x_j). \]

In this case, Condition \ref{conditions} is fulfilled and the stated results hold accordingly with the drift function given by $\partial_i \ln \varrho=- \mathbb{V}^{\prime}(i,\cdot)$, $i=1,\dots,n$, i.e., $\mathbbm{M}^{n,\beta,\varrho}$ solves (\ref{sde}) for $d=1$. Similarly, we also obtain a solution for general $d \in \mathbb{N}$, since the required conditions on $H$ result from the properties of $V$. $d$ only affects the number of nearest neighbors.
\end{remark}

\section{Sticky Brownian motion and Dirichlet form transformations} \label{DFtrans}

\subsection{Sticky Brownian motion on the halfline} \label{secsticky}

Define the Dirichlet form $(\hat{\mathcal{E}},D(\hat{\mathcal{E}}))$ as the closure of
\[ \hat{\mathcal{E}}(f,g):= \int_{[0,\infty)} f^{\prime}(x) g^{\prime}(x)dx, \ \ f,g \in C_c^1([0,\infty)), \]
on $L^2([0,\infty);dx)$. It is well-known that reflecting Brownian motion is associated to $(\hat{\mathcal{E}},D(\hat{\mathcal{E}}))$ and $D(\hat{\mathcal{E}})=H^{1,2}((0,\infty))$ is the Sobolev space of order one.\\
Let $(\tilde{B}_t)_{t \geq 0}$ be a standard Brownian motion defined on a probability space $(\Omega, \mathcal{F}, \mathbb{P})$. Then $\hat{X}_t:=|x + \sqrt{2} \tilde{B}_t|$, $t \geq 0$, yields reflecting Brownian motion on $[0,\infty)$ starting at $x \in [0,\infty)$ and by Tanaka's formula
\begin{align} \label{refltanaka} \hat{X}_t=x + \sqrt{2} \hat{B}_t + L_t^{0+}, \ \ t \geq 0, 
\end{align}
where $\hat{B}_t:=\int_0^t \text{sgn}(x+ \sqrt{2}\tilde{B}_s) d\tilde{B}_s$, $t \geq 0$, is a standard Brownian motion and $(L_t^{0+})_{t \geq 0}$ is the right local time in $0$, i.e., 
\[ L_t^{0+}= \lim_{\varepsilon \rightarrow 0} \frac{1}{\varepsilon}\int_0^t \mathbbm{1}_{[0,\varepsilon)}(\hat{X}_s) ds \]
almost surely. Here, we differ from classical notation by the factor $\sqrt{2}$ (see also Remark \ref{remsqrt2}). The Dirichlet form associated to $(\hat{X}_t)_{t\geq 0}$ is $(\hat{\mathcal{E}},D(\hat{\mathcal{E}}))$ and $(L_t^{0+})_{t \geq 0}$ is an additive functional which is in Revuz correspondance with the Dirac measure $\delta_0$ in $0$. Consider the additive functional $A_t:= t + \beta L_t^{0+}$, $t \geq 0$, for some real constant $\beta >0$. Note that $A_0=0$ and $A_t \rightarrow \infty$ a.s. as $t \rightarrow \infty$. Then sticky Brownian motion on $[0, \infty)$ is usually constructed by a random time change using the right inverse $(\tau(t))_{t >0}$ of $(A_t)_{t >0}$. More precisely,
$X_t:= \hat{X}_{\tau(t)}$ (starting in $x$) solves the stochastic differential equation
\begin{align} \label{1dstickysde} dX_t= \mathbbm{1}_{(0,\infty)}(X_t) \sqrt{2} dB_t + \frac{1}{\beta}~ \mathbbm{1}_{\{0\}}(X_t)dt,
\end{align}
where $(B_t)_{t \geq 0}$ is a standard Brownian motion.
For details on Feller's Brownian motions and in particular, sticky Brownian motion and its transition semigroup, see e.g. \cite{EP12}, \cite{KPS10} or \cite{Kni81}.\\
In \cite[Chapter 6]{FOT94} and \cite[Chapter 5]{ChFu11} is presented how a random time change by an additive functional affects the underlying Dirichlet form. Let $m_{\beta}$ denote the Revuz measure corresponding to $(A_t)_{t \geq 0}$. Clearly, $dm_{\beta}=dx + \beta \delta_0$. In particular, $m_{\beta}$ has full quasi support $[0,\infty)$. Indeed, $m_{\beta}$ is a smooth measure and every set of measure zero with respect to $m_{\beta}$ is in particular of measure zero with respect to $dx$. As a consequence, every quasi open set of measure zero with respect to $m_{\beta}$ is of zero capacity in view of \cite[Lemma 2.1.7 (ii)]{FOT94}. Thus, the Dirichlet form $(\mathcal{E}^{\beta},D(\mathcal{E}^{\beta}))$ on $L^2([0,\infty);m_{\beta})$ associated to $(X_t)_{t \geq 0}$ has the representation
\[  \mathcal{E}^{\beta}(f,g)=\hat{\mathcal{E}}(f,g) \ \ f,g \in D(\mathcal{E}^{\beta})=D(\hat{\mathcal{E}}) \cap L^2([0,\infty);m_{\beta}). \]
In particular, $D(\mathcal{E}^{\beta})=H^{1,2}((0,\infty)) \cap L^2([0,\infty);m_{\beta}) = H^{1,2}((0,\infty))$ by Sobolev embedding. Moreover, $C_c^1([0,\infty))$ is dense in $D(\mathcal{E}^{\beta})$ by \cite[Theorem 5.2.8(i)]{ChFu11} and thus, it is a special standard core of $(\mathcal{E}^{\beta},D(\mathcal{E}^{\beta}))$. Hence, the closure of 
\begin{align} \label{1dform}
\mathcal{E}^{\beta}(f,g)=  \int_{[0,\infty)} f^{\prime}(x) g^{\prime}(x)~dx= \int_{[0,\infty)} \mathbbm{1}_{(0,\infty)}(x)~f^{\prime}(x) g^{\prime}(x)~dm_{\beta}, \ \ f,g \in C_c^1([0,\infty)),
\end{align}
on $L^2([0,\infty);m_{\beta})$ is the Dirichlet form associated to $(X_t)_{t \geq 0}$.

\begin{remark} \label{remsqrt2}
Note that our notion for the solution to the equations (\ref{refltanaka}) and (\ref{1dstickysde}) as reflecting Brownian motion and sticky reflecting Brownian motion  on $[0,\infty)$ respectively differs by the factor $\sqrt{2}$ from classical literature in view of the underlying SDE (\ref{sde!}). If $(Y^{\gamma}_t)_{t \geq 0}$ solves
\[ dY^{\gamma}_t = \mathbbm{1}_{(0,\infty)}(Y^{\gamma}_t) dB_t + \frac{1}{\gamma} \mathbbm{1}_{\{0\}}(Y_t^{\gamma}) dt ~\text{ for } \gamma >0, \]
we obtain the solution to (\ref{1dstickysde}) by setting $X_t:=\sqrt{2}~ Y^{\sqrt{2}\beta}_t$. This identity is useful in order to derive the resolvent density and transition density for the solution to (\ref{1dstickysde}).
\end{remark}
  
Let $F$ be a locally compact separable metric space and denote by $C_0(F):=\{ f \in C(F)|~ \forall \varepsilon >0~ \exists K \subset F \text{ compact }: |f(x)| < \varepsilon ~ \forall x \in F \backslash K \}$ the space of continuous functions on $F$ vanishing at infinity. We can specify the resolvent and transition semigroup of sticky Brownian motion on $[0,\infty)$. \cite[Corollary 3.10, Corollary 3.11]{KPS10} state the following (see also \cite[Section 6.1]{Kni81}):

\begin{theorem} \label{thmdensity}
The transition function $(p_t^{\beta})_{t >0}$ of sticky Brownian motion on $[0,\infty)$ yields a Feller semigroup on $C_0([0,\infty))$, i.e., $p^{\beta}_t(C_0([0,\infty))) \subset C_0([0,\infty))$ and $\lim_{t \downarrow 0} \Vert p^{\beta}_t f -f \Vert_{\infty} =0$ for each $f \in C_0([0,\infty))$. For $\lambda >0$, $x,y \in [0,\infty)$, the resolvent kernel $r_{\lambda}^{\beta}(x,dy)$ of the Brownian motion with sticky origin (i.e., the solution to (\ref{1dstickysde})) is given by
\begin{align}
r_{\lambda}^{\beta}(x,dy)= \frac{r_{\lambda}^D(x,\frac{y}{\sqrt{2}})}{\sqrt{2}} dy + \frac{1}{2 (\sqrt{\lambda}+ \beta \lambda )} \big(2 e^{-\sqrt{2 \lambda}(x+\frac{y}{\sqrt{2}})}dy + \sqrt{2} \beta ~e^{-\sqrt{2 \lambda} x} \delta_0(dy)\big),
\end{align}
where $r_{\lambda}^D(x,y)= \frac{1}{\sqrt{2\lambda}} (e^{-\sqrt{2\lambda}|x-y|}-e^{-\sqrt{2\lambda}(x+y)})$ is the resolvent density of Brownian motion with Dirichlet boundary conditions.

Furthermore, by the inverse Laplace transform it follows that, for $t >0$, the transition kernel $p^{\beta}_t(x,dy)$ of the Brownian motion with sticky origin is given by 
\begin{align} \label{semigroup}
p^{\beta}_t(x,dy)=\frac{p^D_t(x,\frac{y}{\sqrt{2}})}{\sqrt{2}} dy + \sqrt{2} g_{0,\sqrt{2} \beta}(t,x+\frac{y}{\sqrt{2}})dy + \beta~ g_{0,\sqrt{2} \beta}(t,x)~ \delta_0(dy), 
\end{align}
where $p^D_t(x,y)=p(t,x,y)-p(t,x,-y)$ is the transition density for Brownian motion with Dirichlet boundary conditions, $p(t,x,y)= \frac{1}{\sqrt{2\pi t}} e^{-\frac{(x-y)^2}{2t}}$ and 
\[ g_{0,\gamma}(t,x)= \frac{1}{\gamma} \exp(\frac{2x}{\gamma}+\frac{2t}{{\gamma}^2})~ \textnormal{erfc}(\frac{x}{\sqrt{2t}} + \frac{\sqrt{2 t}}{\gamma}), \ \ \text{for } \gamma >0,~ t >0,~ x \geq 0,\]
with the complementary errorfunction $\textnormal{erfc}(x)=\frac{2}{\sqrt{\pi}} \int_x^{\infty} e^{-z^2} dz$, $x \in \mathbb{R}$.
\end{theorem}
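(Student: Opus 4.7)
The plan is in three pieces matching the three claims: derive the resolvent $r_\lambda^\beta$ by solving a boundary value problem of Wentzell type, obtain the transition kernel $p^\beta$ by inverse Laplace transform, and conclude the Feller property from the explicit form via Hille--Yosida. Throughout I would exploit Remark~\ref{remsqrt2}, writing $X_t = \sqrt{2}\, Y_t^{\gamma}$ with $\gamma := \sqrt{2}\beta$, so that all computations can be carried out on the classical sticky Brownian motion $Y^\gamma$ and then transported back by the scaling $y \mapsto y/\sqrt{2}$, which accounts for the various $\sqrt{2}$-factors in the statement.

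For the resolvent, I would first identify the infinitesimal generator by It\^o's formula: for twice-differentiable $f$, the $dt$-part of $f(Y_t^\gamma)$ is $\tfrac12 f''(y)\,\mathbbm{1}_{(0,\infty)}(y) + \tfrac{1}{\gamma} f'(0)\,\mathbbm{1}_{\{0\}}(y)$, so continuity of the drift at the boundary enforces the sticky (Wentzell) condition $f'(0) = \tfrac{\gamma}{2} f''(0)$. Hence $u = R_\lambda^{Y,\gamma} f$ solves
\[ \lambda u - \tfrac12 u'' = f \quad \text{on } (0,\infty), \qquad u'(0) - \gamma\lambda\, u(0) = -\gamma f(0). \]
The bounded solution decomposes as $u = u^D + A\, e^{-\sqrt{2\lambda}\,x}$, where $u^D$ is built from the Dirichlet Green kernel $\tfrac{1}{\sqrt{2\lambda}}\bigl(e^{-\sqrt{2\lambda}|x-y|} - e^{-\sqrt{2\lambda}(x+y)}\bigr)$ via variation of parameters, and the Wentzell condition pins down the constant $A$ in closed form. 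Reverting the scaling $X = \sqrt 2 Y$ then yields the claimed formula for $r_\lambda^\beta(x,dy)$.

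The transition kernel follows by inverting in $\lambda$. The Dirichlet piece inverts to $\tfrac{p^D(t,x,y/\sqrt 2)}{\sqrt 2}$ via the standard heat-kernel Laplace pair. For the boundary piece, I would identify $\tfrac{e^{-\sqrt{2\lambda}\xi}}{\sqrt\lambda + \beta\lambda}$ as the Laplace transform in $t$ of $2\, g_{0,\sqrt{2}\beta}(t,\xi)$; this is a classical Laplace pair and can also be obtained probabilistically by recognizing $g_{0,\gamma}$, up to normalization, as the density of the first passage time to $0$ of a Brownian motion with drift $-1/\gamma$. Evaluating at $\xi = x + y/\sqrt 2$ and $\xi = x$ produces the two remaining terms in (\ref{semigroup}). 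The Feller property is then read off: each summand of $r_\lambda^\beta f$ is continuous in $x$ and decays as $x\to\infty$ when $f\in C_0([0,\infty))$; contractivity $\|\lambda\, r_\lambda^\beta f\|_\infty \le \|f\|_\infty$ follows from $\int r_\lambda^\beta(x,dy) = 1/\lambda$ (directly verifiable on the formula, or from conservativeness of the process); and uniform convergence $\lambda\, r_\lambda^\beta f \to f$ as $\lambda \to \infty$ for $f \in C_c^1((0,\infty))$, a dense subset of $C_0([0,\infty))$, follows by the same ODE estimate. Hille--Yosida then upgrades $(r_\lambda^\beta)_{\lambda>0}$ to a Feller semigroup which coincides with the Laplace-inverted $(p^\beta_t)_{t>0}$ by uniqueness of the Laplace transform.

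The main technical obstacle is the Laplace inversion yielding the complementary-error-function form $g_{0,\gamma}$. The scaling reduction of Remark~\ref{remsqrt2} and the Wentzell-boundary ODE are routine, but identifying $(\sqrt\lambda + \beta\lambda)^{-1} e^{-\sqrt{2\lambda}\xi}$ with the Laplace transform of $2\, g_{0,\sqrt{2}\beta}(t,\xi)$ is the one computation that requires care. A clean alternative, if one prefers to avoid explicit inversion, is to define the right-hand side of (\ref{semigroup}) \emph{ad hoc} and verify directly that it satisfies the Kolmogorov forward equation on $(0,\infty)$ together with the adjoint Wentzell condition at $0$ and initial data $\delta_x$; uniqueness for that parabolic boundary value problem then forces the candidate to coincide with the transition function of the process constructed in Section~\ref{secsticky}.
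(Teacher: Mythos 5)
The paper does not actually prove this theorem: it is imported verbatim from \cite[Corollaries 3.10, 3.11]{KPS10} (see also \cite[Section 6.1]{Kni81}), with only the remark that the proof there rests on the first passage time formula \cite[(6.4)]{Kni81}. Your proposal is therefore a genuine self-contained reconstruction, and it is sound; moreover it is the analytic twin of the cited probabilistic argument. The first passage time formula decomposes $r_\lambda f(x)=r_\lambda^D f(x)+\mathbb{E}_x[e^{-\lambda T_0}]\,r_\lambda f(0)$ and then determines $r_\lambda f(0)$ from the local-time description of the sticky origin; this is exactly your $u=u^D+A\,e^{-\sqrt{2\lambda}\,x}$ with $A=r_\lambda f(0)$ pinned down instead by the boundary relation $\lambda u(0)-\gamma^{-1}u'(0)=f(0)$. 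What your route buys is self-containedness and a clean verification scheme, but you should run it in the verification direction: define $u$ as the solution of the boundary value problem and apply Dynkin/It\^o to $e^{-\lambda t}u(Y_t)$ to conclude $u=R_\lambda f$, rather than asserting a priori that $R_\lambda f$ is $C^2$ up to the boundary and satisfies the Wentzell condition (that regularity is what is being proved). The Hille--Yosida step is fine, though since the process already exists by the time change of Section \ref{secsticky} you could equally read the Feller property off the explicit kernel, as Proposition \ref{1ddoublyfeller} does for the strong Feller half.

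Two concrete corrections. First, your parenthetical identification of $g_{0,\gamma}$ as (up to normalization) the first-passage density of a Brownian motion with drift $-1/\gamma$ is wrong: first-passage densities of drifted Brownian motion are inverse Gaussian, not of $\operatorname{erfc}$ type. The function $g_{0,\gamma}$ comes from the tabulated pair $\mathcal{L}\bigl[e^{a^2t+ab}\operatorname{erfc}\bigl(a\sqrt{t}+\tfrac{b}{2\sqrt{t}}\bigr)\bigr](\lambda)=e^{-b\sqrt{\lambda}}/\bigl(\sqrt{\lambda}(\sqrt{\lambda}+a)\bigr)$ with $a=\sqrt{2}/\gamma$, $b=\sqrt{2}\xi$; since you yourself flag this inversion as the one step requiring care, use the table (or your forward-equation alternative) and drop the probabilistic gloss. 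Second, the $\sqrt{2}$ bookkeeping in $X=\sqrt{2}\,Y^{\sqrt{2}\beta}$ is genuinely delicate: carrying your computation through literally produces the first term as $r_\lambda^D(x/\sqrt{2},y/\sqrt{2})/\sqrt{2}$ and boundary exponentials $e^{-\sqrt{\lambda}(x+y)}$, $e^{-\sqrt{\lambda}x}$, which does not coincide with the displayed kernel (as printed, the kernel fails the sanity checks $\lambda\int_{[0,\infty)} r_\lambda^{\beta}(x,dy)=1$ and symmetry of the density with respect to $dy+\beta\delta_0(dy)$). Whatever constants you end up with, verify them against those two identities; they determine every factor of $\sqrt{2}$ uniquely.
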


\begin{remark}
Note that (\ref{semigroup}) implies that $p^{\beta}_t(x,\cdot)$ is absolutely continuous with respect to the measure $dm_{\beta}=dx + \beta \delta_0$ for each $x \in [0,\infty)$, $t>0$. Therefore, the so-called {\em absolute continuity condition} \cite[(4.2.9)]{FOT94} is fulfilled. In the following we see that the transition semigroup possesses even stronger properties.
\end{remark}

Thus, with $p^{\beta}_t(x,dy)$ as above and $p^{\beta}_t$, $t >0$, the transition semigroup of sticky Brownian motion, it holds
\[ \mathbb{E}_x(f(X_t))=p^{\beta}_tf(x)= \int_{[0,\infty)} f(y) ~p^{\beta}_t(x,dy) \]
for each $x \in [0,\infty)$ and $f \in C_0([0,\infty))$. Furthermore, the resolvent $r_{\lambda}^{\beta}$ is given by
\[ \mathbb{E}_x \big( \int_0^{\infty} e^{-\lambda s} f(X_s) ds \big) = \int_0^{\infty} e^{-\lambda s} p_s^{\beta}f(x) ds=  r_{\lambda}^{\beta} f(x)=\int_{[0,\infty)} f(y)~ r^{\beta}_{\lambda}(x,dy).\]
The proof of Theorem \ref{thmdensity} is based on the so-called first passage time formula (see \cite[(6.4)]{Kni81}).\\
Let $\lambda >0$ and define $A^{\beta}:=\lambda - (r_{\lambda}^{\beta})^{-1}$ on $\mathcal{D}:=r_{\lambda}^{\beta} (C_0([0,\infty)))$ (which is independent of $\lambda$). By \cite[Theorem 6.2, Theorem 6.4]{Kni81} it holds that
\begin{align} \label{genC} A^{\beta} f= f^{\prime \prime}, \ \ \ f \in \mathcal{D}=\{ f \in C_0([0,\infty)) \cap C^2([0,\infty)|~ f^{\prime \prime} \in C_0([0,\infty)) \text{ and } \beta f^{\prime \prime}(0)=f^{\prime}(0) \}. \end{align}
The condition $\beta f^{\prime \prime}(0)=f^{\prime}(0)$ for $f \in C^2([0,\infty))$ is called \emph{Wentzell boundary condition}.

\begin{definition} \label{defDF}
Let $F$ be a locally compact separable metric space. A transition semigroup $p_t$, $t >0$, of an $F$-valued Markov process is said to have the {\em Feller property} if  $p_t (C_0(F)) \subset C_0(F)$ and $\lim_{t \downarrow 0} \Vert p_t f - f \Vert_{\infty}=0$ for each $f \in C_0(F)$. Furthermore, it is called {\em strong Feller} if $p_t(\mathcal{B}_b(F)) \subset C_b(F)$ for each $t >0$. If the transition semigroup has both Feller and strong Feller property, we say that it possesses the {\em doubly Feller property}.
\end{definition}

We can also deduce the following:
\begin{proposition} \label{1ddoublyfeller}
The transition semigroup $(p_t^{\beta})_{t >0}$ of sticky Brownian motion on $[0,\infty)$ has the doubly Feller property.
\end{proposition}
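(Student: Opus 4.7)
The Feller property is already contained in Theorem \ref{thmdensity}, so the task reduces to verifying the strong Feller property, i.e.\ that $p_t^{\beta} f \in C_b([0,\infty))$ for every $f \in \mathcal{B}_b([0,\infty))$ and every $t > 0$. My plan is to read off this property directly from the explicit kernel formula \eqref{semigroup}.

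Given $f \in \mathcal{B}_b([0,\infty))$, I would write
\[
p_t^{\beta} f(x) \;=\; \int_0^{\infty} f(y)\,\frac{p^D(t,x,y/\sqrt{2})}{\sqrt{2}}\,dy \;+\; 2\int_0^{\infty} f(y)\,g_{0,\sqrt{2}\beta}(t,x+y/\sqrt{2})\,dy \;+\; \sqrt{2}\,\beta\,g_{0,\sqrt{2}\beta}(t,x)\,f(0),
\]
and check boundedness and continuity separately for each term. Boundedness of $p_t^{\beta} f$ by $\|f\|_{\infty}$ is immediate from conservativity $p^{\beta}(t,x,[0,\infty))=1$.

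For continuity in $x$, I would use that $p^D(t,\cdot,\cdot)$ and $g_{0,\sqrt{2}\beta}(t,\cdot)$ are continuous functions of their spatial argument (they are explicit combinations of Gaussian heat kernels and the complementary error function), so the integrands depend continuously on $x$ for each fixed $y$. The third term is continuous in $x$ because $g_{0,\sqrt{2}\beta}(t,\cdot)$ is. For the first two terms I would apply dominated convergence: for any convergent sequence $x_n \to x_0$ in $[0,\infty)$, the sequence is contained in some bounded neighborhood $N$, and standard Gaussian tail estimates yield an integrable (in $y$) majorant of $|f(y)|\,p^D(t,x,y/\sqrt{2})/\sqrt{2}$ and of $|f(y)|\cdot 2\,g_{0,\sqrt{2}\beta}(t,x+y/\sqrt{2})$ that is uniform for $x \in N$. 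Pointwise continuity of the integrands together with the uniform dominating functions then gives continuity of the two integrals at $x_0$.

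The expected main obstacle is purely technical: constructing the uniform dominating functions over a neighborhood $N$ requires verifying Gaussian-type decay of $p^D(t,x,y/\sqrt{2})$ and $g_{0,\sqrt{2}\beta}(t,x+y/\sqrt{2})$ in $y$ that is not spoiled by varying $x \in N$. Since $p(t,x,y/\sqrt{2})$ is bounded by $(2\pi t)^{-1/2}$ and decays like $\exp(-y^2/(8t))$ for $y$ large relative to $N$, and $\operatorname{erfc}$ is bounded and decays at infinity, these bounds are routine. Combining the three continuous and bounded contributions gives $p_t^{\beta} f \in C_b([0,\infty))$, which completes the strong Feller part and hence proves the doubly Feller property.
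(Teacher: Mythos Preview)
Your proposal is correct and follows essentially the same approach as the paper: reduce to the strong Feller property via Theorem~\ref{thmdensity}, read off the explicit kernel \eqref{semigroup}, and apply dominated convergence to the integral terms using a Gaussian-type majorant uniform over a neighborhood of $x$. The paper carries this out explicitly only for the $g_{0,\sqrt{2}\beta}$ term, using the bound $\operatorname{erfc}(z)\le e^{-z^2}/(z+\sqrt{z^2+4/\pi})$ to produce the integrable majorant $H(y)=K\exp(y/\beta)\exp(-y^2/(4t))$; your phrase ``$\operatorname{erfc}$ is bounded and decays at infinity'' should be read in this quantitative sense so that the Gaussian decay of $\operatorname{erfc}$ beats the growing factor $\exp((\sqrt{2}x+y)/\beta)$ inside $g_{0,\sqrt{2}\beta}$.
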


\begin{proof}
In consideration of Theorem \ref{thmdensity} it rests to show that $p^{\beta}_t(\mathcal{B}_b([0,\infty))) \subset C_b([0,\infty))$. Let $f \in \mathcal{B}_b([0,\infty))$ and $t >0$. By the inequality
\[ \frac{2}{\sqrt{\pi}} \int_x^{\infty} e^{-(z^2-x^2)} ~dz \leq \frac{2}{\sqrt{\pi}} \int_x^{\infty} e^{-(z-x)^2} ~dz=1  \]
it follows that $\text{erfc}(x) \leq e^{-x^2}$ for each $x \geq 0$. Let $x \in [0, \infty)$ and $(x_n)_{n \in \mathbb{N}}$ a sequence in $[0,\infty)$ such that $x_n \rightarrow x$ as $n \rightarrow \infty$. Then $G_n(y):=f(y) g_{0,\sqrt{2} \beta}(t,x_n +\frac{y}{\sqrt{2}})$ converges for each fixed $y \in [0,\infty)$ to $G(y):=f(y) g_{0,\sqrt{2} \beta}(t,x +\frac{y}{\sqrt{2}})$ as $n \rightarrow \infty$ by continuity of $g_{0,\sqrt{2} \beta}$ in the second variable. Moreover, for each $y \in [0,\infty)$ it holds
\begin{align*}
|G_n(y)| &\leq \Vert f \Vert_{\infty} K_1 \exp(\frac{\sqrt{2} x_n +y}{\beta}) \text{erfc}(\frac{x_n}{\sqrt{2t}}+\frac{y}{2\sqrt{t}}) \\
&\leq \Vert f \Vert_{\infty} K_2 \exp(\frac{y}{\beta}) \text{erfc}(\frac{y}{2 \sqrt{t}}) \\
&\leq \Vert f \Vert_{\infty} K_2 \exp(\frac{y}{\beta}) \exp(-\frac{y^2}{4t})=:H(y)
\end{align*}
for suitable constants $K_1$ and $K_2$. Note that the function $H$ is integrable with respect to the Lebesgue measure on $[0,\infty)$. Thus, dominated convergence yields
\[ \int_{[0,\infty)} G_n(y) dy \rightarrow \int_{[0,\infty)} G(y) dy \]
and by this, we can conclude that $p^{\beta}_tf$ is continuous and bounded. 
\end{proof}

\begin{remark}
Denote by $(T_t^{\beta})_{t \geq 0}$ the $L^2([0,\infty);m_{\beta})$-semigroup of $(\mathcal{E}^{\beta},D(\mathcal{E}^{\beta}))$ defined in (\ref{1dform}). Then, by the previous considerations,  for all $f \in \mathcal{B}_b([0,\infty)) \cap L^2([0,\infty);m_{\beta})$ it holds that $p_t^{\beta} f$ is a $\mu$-version of $T_t^{\beta} f$. Note also that the $L^2([0,\infty);m_{\beta})$-generator $(L,D(L))$ is given by
\[ Lf(x)= \mathbbm{1}_{(0,\infty)}(x) f^{\prime \prime}(x) + \mathbbm{1}_{\{0\}}(x) \frac{1}{\beta} f^{\prime}(x) \ \ \ \text{ for } f \in D(L)= H^{2,2}((0,\infty)),\]
where $H^{2,2}((0,\infty))$ denotes the Sobolev space of order two.
This can be shown using integration by parts, the fact that $D(\mathcal{E})=H^{1,2}((0,\infty))$ and the definition of the space $H^{2,2}((0,\infty))$. For $f \in C_c^2([0,\infty)) \subset D(L)$ such that the Wentzell boundary condition $\beta f^{\prime \prime}(0)=f^{\prime}(0)$ is fulfilled, it holds $Lf=f^{\prime \prime}$ similarly to the generator of the $C_0([0,\infty))$-semigroup given in (\ref{genC}). However, in the $L^2$-setting the boundary behavior is rather described by the measure $m_{\beta}$ instead of the domain of the generator.
\end{remark}

Next we will constuct the Dirichlet form corresponding to $n$ independent sticky Brownian motions on $[0,\infty)$, $n \in \mathbb{N}$. In \cite[Chapter V, Section 2.1]{BH91} it is shown how to construct finite tensor products of Dirichlet spaces. Moreover, the corresponding semigroup of the product Dirichlet form has an explicit representation. In our setting this construction yields the semigroup of an $n$-dimensional process on $E=[0,\infty)^n$, $n \in \mathbb{N}$, such that the components are independent sticky Brownian motions on $[0,\infty)$. In particular, this approach justifies the choice of the Dirichlet form structure used in \cite{FGV13}.\\

Let $(\mathcal{E}^{\beta},D(\mathcal{E}^{\beta}))$ be the Dirichlet form on $L^2([0,\infty);m_{\beta})$ defined in (\ref{1dform}). In accordance with \cite[Definition 2.1.1]{BH91} we define the product Dirichlet form $(\mathcal{E}^{n,\beta},D(\mathcal{E}^{n,\beta}))$ on $L^2([0,\infty)^n; m_{n,\beta})$ with $dm_{n,\beta}=\prod_{i=1}^n (dx_i + \beta \delta_0^i)$ by
\begin{small}
\begin{align} 
\mathcal{E}^{n,\beta}(f,g):=\sum_{i=1}^n \int_{[0,\infty)^{n-1}} \mathcal{E}^{\beta}(f(x_1,\dots, x_{i-1},\cdot,x_{i+1},\dots,x_n),g(x_1,\dots ,x_{i-1},&\cdot ,x_{i+1},\dots ,x_n)) \notag \\ 
&\prod_{j \neq i} (dx_j + \beta \delta_0^j) \label{ndform}
\end{align}
\end{small}
for $f,g \in D(\mathcal{E}^{n,\beta})$, where
\begin{align*} D(\mathcal{E}^{n,\beta}):=\{ &f \in L^2([0,\infty)^n;m_{n,\beta})\big|~\text{for each } i=1,\dots,n \text{ and for } \prod_{j \neq i}(dx_j+\beta \delta^j_0)-{a.e. } \\
&(x_1,\dots ,x_{i-1},x_{i+1},\dots ,x_n) \in [0,\infty)^{n-1}: f(x_1,\dots, x_{i-1},\cdot,x_{i+1},\dots,x_n) \in D(\mathcal{E}^{\beta}) \}
\end{align*}
First, we prove the following:

\begin{lemma} \label{lemdense}
$C_c^1([0,\infty)^n)$ is dense in $D(\mathcal{E}^{n,\beta})$.
\end{lemma}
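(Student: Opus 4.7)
The plan is to exploit the tensor product structure of $(\mathcal{E}^n, D(\mathcal{E}^n))$ together with the fact, recalled just before the lemma, that $C_c^1([0,\infty))$ is a special standard core of each one-dimensional factor $(\mathcal{E}_i, D(\mathcal{E}_i))$. So I would reduce density in the product Dirichlet space to density of tensor products of one-dimensional $C_c^1$-functions, which are themselves elements of $C_c^1([0,\infty)^n)$.

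First I would verify the inclusion $C_c^1([0,\infty)^n) \subset D(\mathcal{E}^n)$. For $f \in C_c^1([0,\infty)^n)$, fixing $n-1$ of the variables leaves a function in $C_c^1([0,\infty)) \subset D(\mathcal{E}_i) = H^{1,2}((0,\infty))$, so the defining condition of $D(\mathcal{E}^n)$ holds. A Fubini argument applied to the integral $\sum_i \int \mathbbm{1}_{\{x_i \neq 0\}} (\partial_i f)^2 \, d\mu_n$, which is finite because $f$ has compact support and a continuous gradient, then shows that both the membership in $D(\mathcal{E}^n)$ and the representation of $\mathcal{E}^n$ agree with that in~(\ref{ndform}).

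Next I would invoke the general tensor product theory of Dirichlet forms \cite[Chapter V, Section 2.1]{BH91}, which tells us that the algebraic tensor product $\bigotimes_{i=1}^n D(\mathcal{E}_i)$ is dense in $D(\mathcal{E}^n)$ with respect to the form norm $\sqrt{\mathcal{E}^n_1(\cdot,\cdot)}$, where $\mathcal{E}^n_1 := \mathcal{E}^n + (\cdot,\cdot)_{L^2([0,\infty)^n;\mu_n)}$. To approximate an elementary tensor $f_1 \otimes \cdots \otimes f_n$ with $f_i \in D(\mathcal{E}_i)$, I would pick $f_i^k \in C_c^1([0,\infty))$ with $\mathcal{E}_{i,1}(f_i - f_i^k, f_i - f_i^k) \to 0$ as $k \to \infty$, using density of $C_c^1([0,\infty))$ in $D(\mathcal{E}_i)$ from Section~\ref{secsticky}. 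A telescoping identity
\[
f_1 \otimes \cdots \otimes f_n - f_1^k \otimes \cdots \otimes f_n^k = \sum_{j=1}^n f_1^k \otimes \cdots \otimes f_{j-1}^k \otimes (f_j - f_j^k) \otimes f_{j+1} \otimes \cdots \otimes f_n,
\]
combined with the Fubini representation of $\mathcal{E}^n_1$ as a sum of one-variable forms integrated against $\mu_n$, reduces the form norm of each summand to a product of $L^2$- and $\mathcal{E}_{i,1}$-norms of the individual one-dimensional factors. Since the $f_i^k$ are bounded in $\mathcal{E}_{i,1}$-norm, each summand tends to zero, so the tensor products of $C_c^1$-functions are dense in $\bigotimes_{i=1}^n D(\mathcal{E}_i)$, and therefore dense in $D(\mathcal{E}^n)$.

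Finally, since every tensor $f_1 \otimes \cdots \otimes f_n$ with $f_i \in C_c^1([0,\infty))$ lies in $C_c^1([0,\infty)^n)$, the intermediate space $C_c^1([0,\infty)^n)$ contains a dense subset of $D(\mathcal{E}^n)$ and is itself contained in $D(\mathcal{E}^n)$ by Step~1, so density follows by transitivity. The main technical obstacle is the telescoping step: one must check that convergence of one-dimensional factors in their $\mathcal{E}_{i,1}$-norm does produce convergence of the tensor product in the product form norm. This is where the additive Fubini structure of $\mathcal{E}^n$ is essential, since it lets each cross term factor into a product of one-dimensional norms; once this bookkeeping is set up the remainder of the argument is routine.
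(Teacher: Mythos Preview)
Your proposal is correct and follows essentially the same approach as the paper: invoke the density of $\bigotimes_{i=1}^n D(\mathcal{E}_i)$ in $D(\mathcal{E}^n)$ from \cite[Chapter V, Section 2.1]{BH91}, then approximate each one-dimensional factor by $C_c^1([0,\infty))$-functions and use the additive Fubini structure of $\mathcal{E}^n$ to control the product. The only cosmetic differences are that the paper restricts to $n=2$ without loss of generality and shows the approximating sequence is Cauchy rather than using your telescoping identity directly.
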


\begin{proof}
Note that $C_c^1([0,\infty)^n) \subset D(\mathcal{E}^{n,\beta})$ by definition of $D(\mathcal{E}^{n,\beta})$.\\
W.l.o.g. let $n=2$. By \cite[Proposition 2.1.3 b)]{BH91} $D(\mathcal{E}^{\beta}) \otimes D(\mathcal{E}^{\beta})$ is dense in $D(\mathcal{E}^{2,\beta})$. We show that $C_c^1([0,\infty)) \otimes  C_c^1([0,\infty)) \subset C_c^1([0,\infty)^2)$ is dense in $D(\mathcal{E}^{\beta}) \otimes D(\mathcal{E}^{\beta})$. Then the assertion follows by a diagonal sequence argument. So let $h \in D(\mathcal{E}^{\beta}) \otimes D(\mathcal{E}^{\beta})$ such that $h(x_1,x_2)=f(x_1)g(x_2)$ for $m_{2,\beta}$-a.e. $(x_1,x_2) \in [0,\infty)^2$ and $f,g \in D(\mathcal{E}^{\beta})$. Choose sequences $(f_k)_{k \in \mathbb{N}}$ and $(g_k)_{k \in \mathbb{N}}$ in $C_c^1([0,\infty))$ such that $f_k \rightarrow f$ in $D(\mathcal{E}^{\beta})$ and $g_k \rightarrow g$ in $D(\mathcal{E}^{\beta})$ as $k \rightarrow \infty$ and define, for $k \in \mathbb{N}$, $h_k \in C_c^1([0,\infty)) \otimes  C_c^1([0,\infty))$ by $h_k(x_1,x_2):=f_k(x_1)g_k(x_2)$, $x_1,x_2 \in [0,\infty)$.
Then it follows immediately by assumption and the product structure that $h_k \rightarrow h$ as $k \rightarrow \infty$ in $L^2([0,\infty)^2; m_{2,\beta})$. Moreover, for $k,l \in \mathbb{N}$
\begin{align*}
\mathcal{E}^{2,\beta}(h_k-h_l)&=\int_{[0,\infty)} \mathcal{E}^{\beta}((h_k-h_l)(\cdot,x_2)) (dx_2 + \beta \delta_0^2) + \int_{[0,\infty)} \mathcal{E}^{\beta}((h_k-h_l)(x_1, \cdot)) (dx_1 + \beta \delta_0^1)\\
\leq &~\mathcal{E}^{\beta}(f_k-f_l)~ \Vert g_k \Vert_{L^2([0,\infty);dx+\beta \delta_0)} + \mathcal{E}^{\beta}(f_l)~ \Vert g_k - g_l \Vert_{L^2([0,\infty);dx+\beta \delta_0)} \\
&+ \mathcal{E}^{\beta}(g_k-g_l)~ \Vert f_k \Vert_{L^2([0,\infty);dx+\beta \delta_0)} + \mathcal{E}^{\beta}(g_l)~ \Vert f_k - f_l \Vert_{L^2([0,\infty);dx+\beta \delta_0)}.
\end{align*}
Hence, $\mathcal{E}^{2.\beta}(h_k-h_l) \rightarrow 0$ as $k,l \rightarrow \infty$ and thus, $h_k \rightarrow h$ as $k \rightarrow \infty$ in $D(\mathcal{E}^{2,\beta})$.
\end{proof}

Let $f,g \in C_c^1([0,\infty)^n)$. Then for each $i=1,\dots,n$ and fixed $(x_1,\dots ,x_{i-1},x_{i+1},\dots ,x_n) \in [0,\infty)^{n-1}$ we have
\begin{align*}
\mathcal{E}^{\beta}&(f(x_1,\dots, x_{i-1},\cdot,x_{i+1},\dots,x_n),g(x_1,\dots ,x_{i-1},\cdot ,x_{i+1},\dots ,x_n)) \\
&= \int_{[0,\infty)} \partial_i f(x_1,\dots ,x_n)~ \partial_i g(x_1,\dots ,x_n) ~dx_i.
\end{align*}
Set $\{ j \neq i \}:=\{1,\dots,i-1,i+1,\dots,n \}$. If $A$ is a subset of some set $I$, we denote by $A^c$ the set $I \backslash A$. Due to the identity
\[ \prod_{j \neq i} (dx_j + \beta \delta_0^j)= \sum_{A \subset \{j \neq i\}} \beta^{|A^c|} \prod_{j \in A} dx_j \prod_{j \in A^c} \delta_0^j \]
we get by rearranging the terms that 
\[ \mathcal{E}^{n,\beta}(f,g)= \sum_{\emptyset \neq B \subset \{1,\dots,n\}} \mathcal{E}_B(f,g) \]
with 
\[ \mathcal{E}_B(f,g):= \int_{[0,\infty)^n} \sum_{i \in B} \partial_i f ~ \partial_i g~d\lambda_B^{n,\beta}, \]
where $d\lambda_B^{n,\beta}:=\beta^{n-|B|} \prod_{j \in B} dx_j \prod_{j \in B^c} \delta_0^j$. In other words, $(\mathcal{E}^{n,\beta},D(\mathcal{E}^{n,\beta}))$ defined in (\ref{ndform}) coincides with the form defined in \cite[(2.3)]{FGV13} disregarding that in our present setting the density function $\varrho$ is identically one. Moreover, (\ref{ndform}) can also be rewritten in the form
\[ \mathcal{E}^{n,\beta}(f,g)= \int_E \sum_{i=1}^n \mathbbm{1}_{\{ x_i \neq 0 \}} ~ \partial_i f ~\partial_i g ~ dm_{n,\beta} \ \ \ \text{ for } f,g \in C_c^1(E). \] 
From the present point of view $(\mathcal{E}^{n,\beta},D(\mathcal{E}^{n,\beta}))$, defined as in (\ref{ndform}), is the sum of $n$ subforms and each such form for $i=1,\dots,n$ describes the dynamics of the process on $[0,\infty)^n$ for all configurations where the $i$-th component is not pinned to zero. In contrast, the forms $\mathcal{E}_B$, $\emptyset \neq B \subset \{1,\dots,n\}$ describe the dynamics of the process for all configurations where exactly the components specified by $B$ are non-zero.\\

By a minor generalization of the results in \cite{FGV13} we get the following lemma:

\begin{lemma}
The Dirichlet form $(\mathcal{E}^{n,\beta},D(\mathcal{E}^{n,\beta}))$ on $L^2([0,\infty)^n;m_{n,\beta})$, $n \in \mathbb{N}$, is conservative, strongly local, strongly regular and symmetric.
\end{lemma}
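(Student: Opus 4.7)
My plan is to verify the four asserted properties in turn, using the explicit representation $\mathcal{E}^n(f,g) = \int_E \sum_{i=1}^n \mathbbm{1}_{\{x_i \neq 0\}}\, \partial_i f\, \partial_i g\, d\mu_n$ derived immediately above the lemma, the density result Lemma~\ref{lemdense}, and the tensor-product construction that produced $(\mathcal{E}^n, D(\mathcal{E}^n))$ in the first place.

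\emph{Symmetry and strong locality.} Symmetry is read off the integrand at once. For strong locality I would use that Lemma~\ref{lemdense} makes $C_c^1(E)$ a core, and that the strong locality property passes to the closure; it therefore suffices to check the defining condition on $C_c^1(E)$. If $f,g \in C_c^1(E)$ and $g$ is constant on an open neighborhood $U$ of $\mathrm{supp}(f)$, then $\partial_i g \equiv 0$ on $U$, while $\partial_i f \equiv 0$ on $E \setminus \mathrm{supp}(f) \supset E \setminus U$, so the pointwise product $\partial_i f\, \partial_i g$ vanishes on all of $E$ and $\mathcal{E}^n(f,g)=0$.

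\emph{Strong regularity.} Lemma~\ref{lemdense} already yields that $C_c^1(E)$ is dense in $D(\mathcal{E}^n)$ in the $(\mathcal{E}^n)_1$-norm; a standard partition-of-unity plus mollification argument on $[0,\infty)^n$ gives density in $C_0(E)$ in the uniform norm. Since $C_c^1(E)$ is moreover a unital algebra stable under composition with $C^1$ approximations of the usual normal contractions, it is a special standard core, which is exactly what is needed for strong regularity. I expect this to be the main technical step, because the normal-contraction approximation requires some care in the setting where $\mu_n$ places mass on the boundary strata; this is, however, a direct adaptation of the corresponding argument in \cite{FGV13}.

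\emph{Conservativeness.} I would exploit the tensor-product structure: by \cite[Chapter V, Section 2.1]{BH91} the $L^2(E;\mu_n)$-semigroup of $(\mathcal{E}^n, D(\mathcal{E}^n))$ is the $n$-fold tensor product of the one-dimensional semigroup $(T_t^\beta)_{t \geq 0}$ associated with (\ref{1dform}), and the conservativeness identity $T_t^{\otimes n} \mathbbm{1} = \mathbbm{1}$ factorises over the tensor components. It therefore reduces to $T_t^\beta \mathbbm{1} = \mathbbm{1}$ in dimension one, which follows by integrating the explicit kernel in Theorem~\ref{thmdensity} and checking that $\int_{[0,\infty)} p^\beta(t,x,dy)=1$ for all $x\ge 0$ and $t>0$.
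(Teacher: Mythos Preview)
Your argument is essentially correct and self-contained, whereas the paper does not give a proof at all: it simply records the lemma with the remark that it follows ``by a minor generalization of the results in \cite{FGV13}'' (the case $\varrho\equiv 1$ of the form treated there). So the paper delegates the verification entirely to that reference, while you re-derive each property from the material already developed in the present section: symmetry and strong locality directly from the integrand on the core $C_c^1(E)$ supplied by Lemma~\ref{lemdense}, strong regularity from the same lemma together with the usual special-standard-core considerations, and conservativeness from the tensor-product representation $T_t^{\beta,n}=\hat T_t^{\beta,1}\cdots\hat T_t^{\beta,n}$ of \cite{BH91} combined with the one-dimensional kernel of Theorem~\ref{thmdensity}. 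Your route is more informative for a reader who does not have \cite{FGV13} at hand; the paper's route is of course shorter. Two minor points worth tightening in your write-up: the passage of strong locality from a core to the closure deserves a reference (e.g.\ the square-field/energy-measure characterisation in \cite{FOT94} or \cite{BH91}), and in the conservativeness step you should phrase $T_t^{\beta}\mathbbm{1}=\mathbbm{1}$ via the sub-Markovian extension to $L^\infty$ (or simply via $p^\beta(t,x,[0,\infty))=1$ from the explicit kernel), since $\mathbbm{1}\notin L^2([0,\infty);\mu)$.
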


Let $x=(x_1,\dots,x_n),y=(y_1,\dots,y_n) \in [0,\infty)^n$, $n \in \mathbb{N}$. Then the transition kernel $p_t^{n,\beta}(x,dy)$ of $n$ independent sticky Brownian motions on $[0,\infty)$ is given by
\[ p_t^{n,\beta}(x,dy)=\prod_{i=1}^n p_t^{\beta}(x_i,dy_i). \]
Thus, for $f \in C_0([0,\infty)^n)$ we have
\[ p_t^{n,\beta}f(x)=\int_{[0,\infty)^n} f(y_1,\dots,y_n) \prod_{i=1}^n p_t^{\beta}(x_i,dy_i). \]
With the definition 
\[ \hat{p}_t^{\beta,i}f(x):=p_t^{\beta}f(x_1,\dots,x_{i-1},\cdot,x_{i+1},\dots,x_n)(x_i) \]
this can be rewritten in the form 
\[ p_t^{n,\beta}f(x)=\hat{p}_t^{\beta,n} \cdots \hat{p}_t^{\beta,1}f(x).\]
By Theorem \ref{thmdensity} we have an explicit representation of $p_t^{n,\beta}(x,dy)$ and presumably it is feasible to use this representation in order to deduce the doubly Feller property for $p_t^{n,\beta}$ similar to the one dimensional case. Instead, we use a general approach which shows that the strong Feller property transfers from $(p_t^{\beta})_{t >0}$ to the product semigroup. For this purpose, the following lemma is useful which relies on Deni's lemma and can be found in \cite[Chapter 1, Lemma 5.11]{Rev84} (also in the context of strong Feller properties of product kernels):

\begin{lemma} \label{lemmaRevuz}
Let $(E,d)$ be a locally compact separable metric space and $\kappa$ a sub-Markovian kernel on $(E,\mathcal{B}(E))$. Moreover, assume that $(g_k)_{k \in \mathbb{N}}$ is a bounded sequence in $\mathcal{B}_b(E)$ with pointwise limit $g$. Then the sequence $(\kappa g_k)_{k \in \mathbb{N}}$ converges locally uniformly to $\kappa g$.
\end{lemma}

\begin{proposition}
The transition semigroup $(p_t^{n,\beta})_{t >0}$ of $n$ independent sticky Brownian motions on $[0,\infty)$ has the doubly Feller property.
\end{proposition}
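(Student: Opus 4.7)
The plan is to lift both the Feller and strong Feller properties from the one-dimensional case (Proposition \ref{1ddoublyfeller}) to $n$ dimensions by exploiting the product kernel structure $p_t^{\beta,n}(x,dy)=\prod_{i=1}^n p_t^\beta(x_i,dy_i)$.

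For the Feller part I would first test the semigroup on pure tensors $f(y_1,\dots,y_n)=\prod_{i=1}^n f_i(y_i)$ with $f_i \in C_0([0,\infty))$: by Fubini,
\[ p_t^{\beta,n} f(x) = \prod_{i=1}^n (p_t^\beta f_i)(x_i), \]
which lies in $C_0([0,\infty)^n)$ since each factor $p_t^\beta f_i$ is in $C_0([0,\infty))$ by Proposition \ref{1ddoublyfeller}. Since $p_t^{\beta,n}$ is a contraction in $\Vert\cdot\Vert_\infty$ and the linear span of such pure tensors is a uniformly dense subalgebra of $C_0([0,\infty)^n)$ by the Stone--Weierstrass theorem, the inclusion $p_t^{\beta,n}(C_0([0,\infty)^n))\subset C_0([0,\infty)^n)$ extends by continuity. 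Strong continuity at $t=0$ is first obtained on pure tensors through the telescoping estimate
\[ \Bigl\Vert\prod_i p_t^\beta f_i - \prod_i f_i\Bigr\Vert_\infty \le \sum_{i=1}^n \Bigl(\prod_{j \neq i} \Vert f_j\Vert_\infty\Bigr)\,\Vert p_t^\beta f_i - f_i\Vert_\infty \longrightarrow 0 \]
as $t \downarrow 0$, and is then extended to all of $C_0([0,\infty)^n)$ by a standard density and contractivity argument.

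For the strong Feller part I would mimic the proof of Proposition \ref{1ddoublyfeller}. By Theorem \ref{thmdensity}, each one-dimensional kernel decomposes as $p_t^\beta(x,dy)=k_t(x,y)\,dy+c_t(x)\,\delta_0(dy)$ with
\[ k_t(x,y)=\tfrac{1}{\sqrt 2}\,p^D(t,x,y/\sqrt 2)+2\,g_{0,\sqrt 2\beta}(t,x+y/\sqrt 2),\qquad c_t(x)=\sqrt 2\,\beta\,g_{0,\sqrt 2\beta}(t,x), \]
both continuous in $x$. Expanding the $n$-fold product into the $2^n$ terms indexed by subsets $S\subset\{1,\dots,n\}$ specifying the coordinates that contribute their absolutely continuous part, I obtain
\[ p_t^{\beta,n} f(x)=\sum_{S \subset \{1,\dots,n\}} \Bigl(\prod_{i \notin S} c_t(x_i)\Bigr) \int_{[0,\infty)^{|S|}} f\bigl((y_i)_{i\in S},\mathbf 0_{S^c}\bigr)\prod_{i \in S} k_t(x_i,y_i)\,dy_i. \]
For a fixed $S$ and any sequence $x^{(m)}\to x$, continuity of the prefactor $\prod_{i \notin S}c_t(x_i)$ is immediate, while continuity of the integral follows from dominated convergence: the bounds on $g_{0,\sqrt 2\beta}$ and on $p^D$ used in the proof of Proposition \ref{1ddoublyfeller} supply, coordinate-wise, an integrable majorant of the form $K\,e^{y_i/\beta}\,e^{-y_i^2/(4t)}$ that is uniform for $x^{(m)}$ in a compact neighborhood of $x$. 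These one-variable majorants tensorise to a product majorant which is integrable on $[0,\infty)^{|S|}$, with the $\Vert f\Vert_\infty$ factor pulled out. Summing the finitely many continuous bounded contributions yields $p_t^{\beta,n} f \in C_b([0,\infty)^n)$.

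The main technical obstacle will be the bookkeeping in the strong Feller step: one has to cleanly separate the singular and absolutely continuous parts of the $n$-fold product kernel, and verify that the tensor product of the one-dimensional Gaussian/erfc majorants still yields a function integrable on $[0,\infty)^{|S|}$ so that dominated convergence applies uniformly on compact sets in the starting point. Once this is carried out for one subset $S$, the remaining $2^n-1$ summands are handled identically, and the proposition then reduces entirely to the one-dimensional estimates already established in the proof of Proposition \ref{1ddoublyfeller} combined with the finite product expansion above.
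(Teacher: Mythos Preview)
Your proposal is correct and matches the paper's approach: the paper's proof is simply the one-line remark that ``by the same arguments as in Proposition \ref{1ddoublyfeller} the doubly Feller property holds also for $p_t^{\beta,n}$,'' and you have spelled out precisely what those arguments are in $n$ dimensions, namely exploiting the product kernel structure together with the one-dimensional density estimates and dominated convergence. The only mild elaboration beyond what the paper hints at is your use of Stone--Weierstrass on pure tensors for the Feller part (rather than working directly with the explicit product density), but this is a standard and entirely equivalent route.
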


\begin{proof}
For simplicity assume again $n=2$. By Theorem \ref{thmdensity} $(p_t^{\beta})_{t >0}$ is a conservative Feller semigroup and hence, it extends to a conservative Feller semigroup $(\tilde{p}_t^{\beta})_{t >0}$ on the one point compactification $[0,\infty)^{\Delta}:=[0,\infty) \cup \{ \Delta \}$ as stated in \cite[Lemma 17.13]{Kal97}. This corresponds to an extension of the kernels $p_t^{\beta}(x,dy)$, $t >0$, $x \in [0,\infty)$, from $([0,\infty),\mathcal{B}([0,\infty))$ to $([0,\infty)^{\Delta},\mathcal{B}([0,\infty)^{\Delta}))$ by setting
\[\tilde{p}^{\beta}_t(x,A):=\left\{\begin{array}{cl} p^{\beta}_t(x,A \backslash \{ \Delta \}), & \mbox{if } x \in [0,\infty) \\ 
\delta_{\Delta}(A), & \mbox{if } x=\Delta \end{array}\right.
                                       \]
for $A \in \mathcal{B}([0,\infty)^{\Delta})$. Hence, the argument of \cite[Lemma 20.16]{Kal97} can be applied in use of \cite[Theorem 3.29, Lemma 17.3]{Kal97} (note that $[0,\infty)^{\Delta}$ is metrizable, since $[0,\infty)$ is locally compact, second countable and Hausdorff) and the transition semigroup of two independent sticky Brownian motions on $[0,\infty)$ gives rise to a Feller semigroup $(\tilde{p}_t^{2, \beta})_{t >0}$ on $C([0,\infty)^{\Delta} \times [0,\infty)^{\Delta})$ of two independent processes. We use the one point compactification, since the used results are formulated for compact metric spaces. Let $f \in C_0([0,\infty)^2)$. Then, $f$ extends uniquely to an element $\tilde{f} \in C([0,\infty)^{\Delta} \times [0,\infty)^{\Delta})$ by setting $\tilde{f}(x,y):=0$, whenever $x=\Delta$ or $y=\Delta$. This definition yields 
\[ \tilde{p}^{2,\beta}_t \tilde{f}(x_1,x_2)=p^{2,\beta}_t f(x_1,x_2) \text{ for } (x_1,x_2) \in [0,\infty)^2 \ \text{ and } \tilde{p}^{2,\beta}_t \tilde{f}(x_1,x_2)=0 \text{ else}.\]
Consequently, the restriction of $\tilde{p}^{2,\beta}_t \tilde{f}$ to $[0,\infty)^2$ and thus, $p^{2,\beta}_t f$ is an element of $C_0([0,\infty)^2)$. Moreover, the strong continuity transfers directly from $(\tilde{p}^{2,\beta}_t)_{t >0}$ to $(p^{2,\beta}_t)_{t >0}$.\\
Let $t >0$, $f \in \mathcal{B}_b([0,\infty)^2)$, $(x_1,x_2) \in [0,\infty)^2$ and $((x_1^k,x_2^k))_{k \in \mathbb{N}}$ a sequence in $[0,\infty)^2$ converging to $(x_1,x_2)$. Then, $\hat{p}_t^{\beta,1}f(x_1^k,\cdot)$, $k \in \mathbb{N}$, yields a bounded sequence in $\mathcal{B}_b([0,\infty))$ due to the boundedness of $f$ and the contraction property. Moreover, for fixed $y \in [0,\infty)$ it holds $\hat{p}_t^{\beta,1}f(x_1^k,y) \rightarrow \hat{p}_t^{\beta,1}f(x_1,y)$ as $k \rightarrow \infty$ by the strong Feller property of $(p_t^{\beta})_{t >0}$. Due to Lemma \ref{lemmaRevuz} we can conclude that the sequence given by $p_t^{\beta}$ applied to $\hat{p}_t^{\beta,1}f(x_1^k,\cdot)$ converges locally uniformly to $p_t^{\beta}$ applied to $\hat{p}_t^{\beta,1}f(x_1,\cdot)$. Hence, 
\[ p_t^{2,\beta}f(x_1^k,x_2^k)=\hat{p}_t^{\beta,2}\hat{p}_t^{\beta,1}f(x_1^k,x_2^k) \rightarrow \hat{p}_t^{\beta,2}\hat{p}_t^{\beta,1}f(x_1,x_2)=p_t^{2,\beta}f(x_1,x_2) \ \text{ as } k \rightarrow \infty \]
and thus, $p_t^{2,\beta}(\mathcal{B}_b([0,\infty)^2)) \subset C_b([0,\infty)^2)$.
\end{proof}

Let $(T_t^{\beta})_{t \geq 0}$ be the $L^2([0,\infty);m_{\beta})$-semigroup of $(\mathcal{E}^{\beta},D(\mathcal{E}^{\beta}))$. For $f \in L^2([0,\infty)^n;m_{n,\beta})$, $i=1,\dots,n$, and $m_{n,\beta}$-a.e. $(x_1,\dots,x_n) \in [0,\infty)^n$ set
\[ \hat{T}_t^{\beta,i} f(x_1,\dots,x_n):= T_t^{\beta} f(x_1,\dots,x_{i-1},\cdot,x_{i+1},\dots,x_n)(x_i). \]
and
\[ T_t^{n,\beta} f= \hat{T}_t^{\beta,1} \cdots \hat{T}_t^{\beta,n} f.\]
By \cite[Proposition 2.1.3 a)]{BH91} $(T_t^{n,\beta})_{t \geq 0}$ is the $L^2([0,\infty)^n;m_{n,\beta})$-semigroup associated to the form $(\mathcal{E}^{n,\beta},D(\mathcal{E}^{n,\beta}))$ defined in (\ref{ndform}) and the order of the $\hat{T}_t ^{\beta,i}$, $i=1,\dots,n$, is arbitrary.\\ 
Let $f \in \mathcal{B}_b([0,\infty)^n) \cap L^2([0,\infty)^n;m_{n,\beta})$. Then we have for $m_{n,\beta}$-a.e. $x=(x_1,\dots,x_n) \in [0,\infty)^n$
\begin{align} \label{iter1}
\hat{T}_t^{\beta,n} f(x_1,\dots,x_n)=T_t^{\beta} f(x_1,\dots,x_{n-1},\cdot)(x_n) &=p_t^{\beta} f(x_1,\dots,x_{n-1},\cdot)(x_n) \\
&=\int_{[0,\infty)} f(x_1,\dots,x_{n-1},y_n) p^{\beta}_t(x_n,dy_n) \notag
\end{align}
and similarly
\begin{align} \label{iter2}
\hat{T}_t^{\beta,n-1} \hat{T}_t^{\beta,n} f(x_1,\dots,x_n)= \int_{[0,\infty)} \int_{[0,\infty)} f(x_1,\dots,x_{n-2},y_{n-1},y_n) p^{\beta}_t(x_n,dy_n) p^{\beta}_t(x_{n-1},dy_{n-1}).
\end{align}

Proceeding successively as in (\ref{iter1}) and (\ref{iter2}), together with the preceding considerations, proves Proposition \ref{propindep}.
 
\subsection{Girsanov transformations} \label{secgirsanov}

We summerize some results on Girsanov transformations of a Markov process and the associated Dirichlet form. The statements can be found in \cite{Ebe96}, \cite{Fit08} and \cite[Chapter 6]{FOT94}. In some cases we do not state the results in full generality, since  for our purposes it is sufficient to simplify the assumptions.\\

Let $\mathbb{M}=(\Omega,\mathcal{F},(\mathcal{F}_t)_{t \geq 0}, (X_t)_{t \geq 0}, (\mathbb{P}_x)_{x \in F})$ be a $\mu$-symmetric strong Markov process with state space $F \subset \mathbb{R}^n$, $n \in \mathbb{N}$, continuous sample paths and infinite lifetime, where $\mu$ is a positive Radon measure on $(F,\mathcal{B}(F))$ with full support. We suppose that the process is canonical, i.e., $\Omega=C([0,\infty),F)$ and $X_t(\omega)=\omega(t)$ for $\omega \in \Omega$ and $t \geq 0$. Moreover, assume that its Dirichlet form $(\mathcal{E},D(\mathcal{E}))$ on $L^2(F;\mu)$ is strongly regular (property (D1)), strongly local, conservative and that it possesses a square field operator $\Gamma$ (property (D2)). We denote its generator by $(L,D(L))$. Suppose that $\mathcal{D}:=C_c^1(F)$ a dense subspace of $D(\mathcal{E})$, $D(L) \cap \mathcal{D}$ is dense in $D(\mathcal{E})$ and for every $f \in \mathcal{D}$ it holds $f,\Gamma(f) \in L^{\infty}(F;\mu)$. Denote by $(p_t)_{ t>0}$ the transition semigroup of $\mathbb{M}$, i.e., for $f \in \mathcal{B}_b(F)$ it holds
\[ p_tf(x):= \mathbb{E}_x(f(X_t)),\]
and we suppose that the transition density $p_t(x,\cdot)$, $x \in F$, $t >0$, possesses the absolute continuity condition \cite[(4.2.9)]{FOT94}. \\
A function $f$ is said to be in $D(\mathcal{E})_{\text{loc}}$ (respectively $D(\mathcal{E})_{b,\text{loc}}$) if for any relatively compact open set $G$ there exists a function (respectively bounded function) $g \in D(\mathcal{E})$ such that $f=g$ $\mu$-a.e. on $G$. Fix some $\phi \in D(\mathcal{E})_{b,\text{loc}}  \cap C_b(F)$ such that $\phi$ is strictly positive and $\ln \phi \in D(\mathcal{E})_{\text{loc}}$. Define $\varrho:=\phi^2$ and the symmetric bilinear form $(\mathcal{E}^{\varrho},\mathcal{D}^{\varrho})$ by
\begin{align} &\mathcal{D}^{\varrho}:=\{ f \in D(\mathcal{E})|~\int_F (\Gamma(f)+f^2) \varrho d\mu < \infty \}, \\
&\mathcal{E}^{\varrho}(f,g):= \int_F \Gamma(f,g)~ \varrho d\mu. \notag
\end{align}
In particular, $\mathcal{D}^{\varrho}=D(\mathcal{E})$, since $\varrho$ is bounded. \\

Under the above assumptions the conditions (D1)-(D3) of \cite{Ebe96} are fulfilled. The rather general property (D3) holds, since the form $(\mathcal{E},D(\mathcal{E}))$ is required to be strongly regular and $D(L) \cap \mathcal{D}$ is assumed to be dense in $D(\mathcal{E})$. Moreover, $\varrho$ is (locally) bounded and $\ln \phi \in D(\mathcal{E})_{\text{loc}}$. Thus, by \cite[Theorem 1.1, Corollary 1.3]{Ebe96} we can conclude the following:

\begin{lemma} \label{lemclosureGirsanov} The symmetric bilinear form $(\mathcal{E}^{\varrho},D(\mathcal{E}))$ is densely defined and closable on \linebreak $L^2(F;\varrho \mu)$ and its closure $(\mathcal{E}^{\varrho}, D(\mathcal{E}^{\varrho}))$ is a strongly local Dirichlet form.
Moreover, $(\mathcal{E}^{\varrho}, D(\mathcal{E}^{\varrho}))=(\mathcal{E}^{\varrho},\overline{\mathcal{D}}))$, i.e., $\mathcal{D}$ is a dense subset of $D(\mathcal{E}^{\varrho})$.
\end{lemma}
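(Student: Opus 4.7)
The plan is essentially to verify the structural hypotheses of \cite[Theorem 1.1 and Corollary 1.3]{Ebe96} in our setting and then invoke these results directly; the lemma is more of a packaging step than a theorem requiring an independent argument. The proof splits into (a) checking closability and strong locality via Theorem 1.1, and (b) obtaining the core property $\overline{\mathcal{D}} = D(\mathcal{E}^\varrho)$ via Corollary 1.3.

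First I would verify conditions (D1)--(D3) of \cite{Ebe96}. The key ingredients are: an algebraic core, here $\mathcal{D} = C_c^1(F)$, which by assumption is dense in $D(\mathcal{E})$, satisfies $\mathcal{D} \cap D(L)$ dense in $D(\mathcal{E})$, and consists of functions $f$ with $f,\Gamma(f) \in L^\infty(F;\mu)$; the existence of the square field operator $\Gamma$; and local boundedness of $\varrho = \phi^2$, which follows from $\phi \in C(F)$ on the locally compact state space $F$. Strong regularity and strong locality of $(\mathcal{E}, D(\mathcal{E}))$ provide the remaining background hypotheses. Dense definedness of $(\mathcal{E}^\varrho, D(\mathcal{E}))$ in $L^2(F;\varrho\mu)$ follows by a standard cutoff argument: for $u \in L^2(F;\varrho\mu)$ the truncates $u\, \mathbbm{1}_G$, $G \subset F$ relatively compact, lie in $L^2(F;\mu)$ by local boundedness of $\varrho$ and hence are $\mu$-approximable by elements of $\mathcal{D}$, and local boundedness of $\varrho$ transfers this to an $L^2(F;\varrho\mu)$-approximation.

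With the hypotheses in place, \cite[Theorem 1.1]{Ebe96} yields closability of $(\mathcal{E}^\varrho, D(\mathcal{E}))$ on $L^2(F;\varrho\mu)$ together with strong locality of the closure; strong locality is preserved because it is an intrinsic property of the carré du champ and multiplication by the positive weight $\varrho$ does not affect it. Then \cite[Corollary 1.3]{Ebe96} delivers the remaining assertion that $\mathcal{D}$ is a core for $(\mathcal{E}^\varrho, D(\mathcal{E}^\varrho))$. The main technical obstacle is hidden inside Eberle's proofs, namely constructing, for $f \in D(\mathcal{E}^\varrho)$, an approximating sequence in $\mathcal{D}$ that converges in the weighted norm $\mathcal{E}_1^\varrho$ by a truncation procedure combined with carré du champ manipulations; in our argument this work is outsourced to \cite{Ebe96} and only the hypothesis-checking must be performed explicitly.
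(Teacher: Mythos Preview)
Your proposal is correct and matches the paper's own treatment exactly: the paper simply remarks that conditions (D1)--(D3) of \cite{Ebe96} hold (using strong regularity of $(\mathcal{E},D(\mathcal{E}))$ and local boundedness of $\varrho$) and then states the lemma as a direct consequence of \cite[Theorem 1.1, Corollary 1.3]{Ebe96}, with no further argument given. Your write-up in fact supplies slightly more detail (e.g.\ the cutoff argument for dense definedness) than the paper does.
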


Due to \cite[Theorem 5.5.1]{FOT94} it is possible to give a Fukushima decomposition of the process $\mathbb{M}$ of the form
\begin{align} \label{fukudec} \ln \phi (X_t) - \ln \phi(X_0)= M_t^{[\ln \phi]} + N_t^{[\ln \phi]} \ \ \mathbb{P}_x-\text{a.s. for each } x \in F,~t \geq 0, 
\end{align}
where $(M_t^{[\ln \phi]})_{t \geq 0}$ is a martingale additive functional of locally finite energy and $(N_t^{[\ln \phi]})_{t \geq 0}$ is a continuous additive functional of locally zero energy (see \cite[p.273]{FOT94}). The decomposition holds indeed for every starting point in $F$, since the transition semigroup of $\mathbb{M}$ fulfills the absolute continuity condition by the above assumptions. Note that the construction of the decomposition (\ref{fukudec}) requires some localization argument using the definition of $D(\mathcal{E})_{\text{loc}}$. In view of \cite[Theorem 5.5.5]{FOT94} it is even possible to determine $(M_t^{[\ln \phi]})_{t \geq 0}$ and $(N_t^{[\ln \phi]})_{t \geq 0}$ more explicitly in more specific cases. \\

Define the positive multiplicative functional $(Z_t)_{t \geq 0}$ by
\begin{align} \label{MF} Z_t=\exp( M_t^{[\ln \phi]} - \frac{1}{2} \langle M^{[\ln \phi]} \rangle_t),~t \geq 0. 
\end{align}
Then, it is possible to define a $\varrho \mu$-symmetric diffusion \[\mathbb{M}^{\varrho}:=(\Omega,\mathcal{F},(\mathcal{F}_t)_{t \geq 0}, (X_t)_{t \geq 0}, (\mathbb{P}^{\varrho}_x)_{x \in F})\] by the formula 
\[ {\mathbb{P}^{\varrho}_x}_{|\mathcal{F}_t} := Z_t ~{\mathbb{P}_x}_{|\mathcal{F}_t},~x \in F, \]
as stated in \cite[Section 1]{Fit08}. Thus, $\mathbb{P}^{\varrho}_x$ is locally absolutely continuous with respect to $\mathbb{P}_x$ for $x \in F$ and the transition semigroup $(p^{\varrho}_t)_{t > 0}$ of $\mathbb{M}^{\varrho}$ is given by
\[ p^{\varrho}_t f(x)= \mathbb{E}_x(Z_t f(X_t)),~t \geq 0, \]
for $f \in \mathcal{B}_b(F)$. Let $(G_k)_{k \in \mathbb{N}}$ be an increasing sequence of open, relatively compact subsets of $F$ such that $F=\cup_{k \in \mathbb{N}} G_k$. In \cite[Section 1]{Fit08} it is shown that the results of \cite{Fit97} apply to the present setting and the Dirichlet form of $\mathbb{M}^{\varrho}$ is given by the closure of $(\mathcal{E}^{\varrho},\mathcal{C})$ on $L^2(F;\varrho \mu)$, where
\[ \mathcal{C}:= \bigcup_{k \in \mathbb{N}} \{ f \in D(\mathcal{E}) \cap L^{\infty}(F;\mu)|~\tilde{f}=0 ~\text{ q.e. on } F \backslash G_k\}. \]
Hence, the Dirichlet form of $\mathbb{M}^{\varrho}$ is $(\mathcal{E}^{\varrho},D(\mathcal{E}^{\varrho}))$ in view of Lemma \ref{lemclosureGirsanov}, since $\mathcal{D} \subset \mathcal{C} \subset D(\mathcal{E})$. \\
We say that the process $\mathbb{M}^{\varrho}$ and the transition semigroup $(p^{\varrho}_t)_{t >0}$ are the Girsanov transformation of $\mathbb{M}$ and $(p_t)_{t >0}$ respectively by the multiplicative functional $(Z_t)_{t \geq 0}$.

\section{Construction of the strong Feller transition semigroup} \label{Feller}

In \cite{Chu85} criteria are given under which the doubly Feller property is preserved under the transformation by a multiplicative functional $(Z_t)_{t \geq 0}$. This concept is extended in \cite{CK08}. It is shown that the conditions on $(Z_t)_{t \geq 0}$ can be weakened. Moreover, the setting is applied to Feynman-Kac and Girsanov transformations. In particular, precise conditions on the Revuz measure of the underlying additive functionals are given. We quote a result of \cite{CK08} concerning the preservation of the doubly feller property under Girsanov transformations. Since we deal with strong Markov processes with {\em continuous sample paths}, we restrict the results to this setting instead of stating them in full generality.\\

Let $\mathbb{M}=(\Omega,\mathcal{F},(\mathcal{F}_t)_{t \geq 0}, (X_t)_{t \geq 0}, (\mathbb{P}_x)_{x \in F})$ be again a $\mu$-symmetric strong Markov process with state space $F \subset \mathbb{R}^n$, $n \in \mathbb{N}$, continuous sample paths and infinite lifetime, where $\mu$ is a positive Radon measure on $(F,\mathcal{B}(F))$ with full support. As before, denote by $(p_t)_{ t>0}$ the transition semigroup of $\mathbb{M}$. Assume that $(p_t)_{ t>0}$ possesses the doubly Feller property. \\
Let $r_{\lambda}(x,y)$, $\lambda >0$, $x,y \in F$, be the resolvent kernel of $\mathbb{M}$, i.e., the resolvent $(r_{\lambda})_{\lambda>0}$ of $\mathbb{M}$ is given by
\[ r_{\lambda}f(x)=\int_F f(y) r_{\lambda}(x,y) d\mu(y) \]
for $f \in \mathcal{B}_b(F)$, $\lambda >0$ and $x \in F$. For a Borel measure $\nu$ on $\mathcal{B}(F)$ we define the $\lambda$-\textit{potential of} $\nu$ by $R_{\lambda}\nu(x):= \int_F r_{\lambda}(x,y) d\nu(y)$, $\lambda >0$.\\
Let $B$ be a non-empty open subset of $F$ and denote by $B_{\Delta}:=B \cup \{\Delta\}$ the one-point compactification of $B$. Define $(X_t^B)_{ t \geq 0}$ by
\begin{align*}
X_t^B:=
\left\{
\begin{array}{l}
X_t \ \ \text{ if } t < \tau_B \\
\Delta  \ \text{ if } t \geq \tau_B
\end{array}
\right.
\end{align*} 
where $\tau_B :=\inf \{ t>0 |~ X_t \notin B \}$.
The transition semigroup of $(X_t^B)_{ t \geq 0}$ is given by
\[ p_t^B(x,A)=\mathbb{P}_x(X_t \in A,~ t < \tau_B) \]
and 
\[ p_t^B(x,\{\Delta\}):=1-p_t^B(x,B), \ \ p_t^B(\Delta, \{ \Delta \}):=1, \]
for $x \in B$, $A \in  \mathcal{B}(B)$. A function $f \in \mathcal{B}_b(F)$ is extended to $\Delta$ by setting $f(\Delta)=0$. For functions of this form, the transition semigroup of $(X_t^B)_{ t \geq 0}$ reads
\[ p_t^B f(x)=\mathbb{E}_x (f(X_t) \mathbbm{1}_{\{t < \tau_B\}}). \]
The set $B$ is called \textit{regular} if for each $x \in F \backslash B$, we have $\mathbb{P}_x(\tau_B=0)=1$.\\

Let $(M_t)_{t \geq 0}$ be a continuous locally square integrable martingale additive functional and denote by $\mu_{\langle M \rangle}$ the Revuz measure of $(\langle M \rangle_t)_{t \geq 0}$. Furthermore, the transition semigroup $(\tilde{p}_t^B)_{t \geq 0}$ is given by
\[ \tilde{p}_t^Bf(x):=\mathbb{E}_x(Z_t f(X_t) \mathbbm{1}_{\{t < \tau_B\}}), \]
where $Z_t:=\exp (M_t-\frac{1}{2} \langle M \rangle_t )$, $t \geq 0$ and corresponds to the process obtained from $\mathbb{M}^{\varrho}$ (see Section \ref{secgirsanov}) killed when leaving $B$. In the special case $B=F$ this definition reduces to the transition semigroup of $\mathbb{M}^{\varrho}$. 

\begin{definition}
A Borel measure $\nu$ on $\mathcal{B}(F)$ is said to be of 
\begin{enumerate}
\item[(i)] {\em Kato class} if $\lim_{\lambda \rightarrow \infty} \sup_{x \in F} R_{\lambda} \nu(x)=0$,
\item[(ii)] {\em extended Kato class} if $\lim_{\lambda \rightarrow \infty} \sup_{x \in F} R_{\lambda} \nu(x) < 1$,
\item[(iii)] {\em local Kato class} if $\mathbbm{1}_K \nu$ is of Kato class for every compact set $K \subset F$. 
\end{enumerate}
\end{definition}

\begin{theorem} \label{thmGT}
Assume that $\frac{1}{2} \mu_{\langle M \rangle}$ is a positive Radon measure of local and extended Kato class and let $B$ be a regular open subset of $F$. Then $(\tilde{p}_t^B)_{t \geq 0}$ has the doubly Feller property. Moreover, $(Z_t)_{t \geq 0}$ is a martingale and
\begin{align*}
&\lim_{t \rightarrow 0} \sup_{x \in D} ~\mathbb{E}_x( |Z_t-1| \mathbbm{1}_{\{t < \tau_D\}}) =0 \text{ for any relatively compact open set } D \subset B, \\
& \sup_{0 \leq s \leq t}~ \sup_{x \in B} ~\mathbb{E}_x ( Z_s^p \mathbbm{1}_{\{s < \tau_B \}} ) < \infty \text{ for some } p>1 \text{ and each } t>0.
\end{align*}
\end{theorem}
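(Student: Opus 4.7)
The cornerstone is a Khasminskii-type inequality converting Kato control of $\tfrac12\mu_{\langle M\rangle}$ into exponential integrability of $\langle M\rangle$. The extended Kato assumption $\sup_x R_\lambda(\tfrac12\mu_{\langle M\rangle})(x) < 1$ for some $\lambda > 0$, combined with the Revuz correspondence and the strong Markov property, yields by iteration
\[ \sup_{x \in F,\, 0 \le s \le t} \mathbb{E}_x\bigl[\exp(\alpha \langle M\rangle_s)\bigr] < \infty \quad \text{for some } \alpha > 0,\ t > 0. \]
Together with the supermartingale bound $\mathbb{E}_x[\exp(pM_s - \tfrac{p^2}{2}\langle M\rangle_s)] \le 1$ and Hölder's inequality, this gives the $L^p$-estimate on $Z$ stated in the theorem for some $p > 1$. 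Uniform integrability then forces the local martingale $(Z_t)$ to be a true martingale, so $\mathbb{E}_x[Z_t] = 1$ for all $x$ and $t$.

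To obtain $\sup_{x\in D} \mathbb{E}_x[|Z_t-1|\mathbbm{1}_{\{t<\tau_D\}}]\to 0$, I would write $Z_t - 1 = \int_0^t Z_s\,dM_s$ via Itô, square and take expectations to get $\mathbb{E}_x[(Z_t-1)^2\mathbbm{1}_{\{t<\tau_D\}}]\le \mathbb{E}_x[\int_0^t Z_s^2\,d\langle M\rangle_s]$, and then bound the right side using Cauchy--Schwarz, the $L^p$-estimate on $Z$, and the \emph{local} Kato property of $\mu_{\langle M\rangle}$; a final Cauchy--Schwarz produces the $L^1$-form stated. For the Feller property of $(\tilde p_t^B)$, I would decompose $\tilde p_t^B f = p_t^B f + \mathbb{E}_\cdot[(Z_t-1)f(X_t)\mathbbm{1}_{\{t<\tau_B\}}]$. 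For $f \in C_0(B)$ the first summand lies in $C_0(B)$ since $B$ is regular and $(p_t)$ is doubly Feller, while the second summand has sup-norm controlled by $\|f\|_\infty\sup_x\mathbb{E}_x[|Z_t-1|\mathbbm{1}_{\{t<\tau_B\}}]$; this gives both strong continuity at $t=0$ and continuity in $x$ (after a Hölder step against the uniform $L^p$-bound on $Z_t$).

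The main obstacle is the strong Feller property. Here I would iterate the Markov property at $t/2$ to obtain, for $f \in \mathcal{B}_b(B)$,
\[ \tilde p_t^B f(x) = \mathbb{E}_x\bigl[Z_{t/2}\,\mathbbm{1}_{\{t/2<\tau_B\}}\, H(X_{t/2})\bigr], \qquad H := \tilde p_{t/2}^B f, \]
with $\|H\|_\infty \le \|f\|_\infty$ by the martingale property of $Z$. Applying Hölder with conjugate exponents $p,q$ and splitting into an $L^p$-piece (uniformly bounded by the Khasminskii estimate) and an $L^q$-piece of the form $\mathbb{E}_x[|H|^q(X_{t/2})\mathbbm{1}_{\{t/2<\tau_B\}}] = p_{t/2}^B(|H|^q)(x)$ reduces the continuity of $\tilde p_t^B f$ in $x$ to the strong Feller property of the subprocess semigroup $p_{t/2}^B$, which is inherited from the doubly Feller hypothesis on $(p_t)$ together with regularity of $B$. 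The delicate point I expect to struggle with is that $Z_{t/2}$ is a path functional rather than a function of $X_{t/2}$, so the Hölder splitting cannot be decoupled trivially; controlling $\tilde p_t^B f(x) - \tilde p_t^B f(y)$ cleanly requires either passing through the conditional expectation $\mathbb{E}_x[Z_{t/2}\mid X_{t/2}]$ or exploiting the uniform $L^p$-integrability of $Z_{t/2}$ jointly in $x$ to interchange limits --- precisely the argument developed in \cite{CK08} and patterned on \cite{Chu85}.
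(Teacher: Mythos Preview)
The paper does not actually prove this theorem: its entire proof reads ``See \cite[Theorem 3.3]{CK08}.'' So there is nothing in the paper to compare your argument against; the result is imported wholesale from Chen--Kuwae.

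Your sketch is a reasonable outline of the Chung--Chen--Kuwae machinery (Khasminskii's lemma from the extended Kato bound to get exponential integrability of $\langle M\rangle$, hence uniform $L^p$ control of $Z$, hence the martingale property; then the $t/2$-iteration combined with H\"older and the strong Feller property of the killed base semigroup). You correctly flag the genuinely nontrivial step: in the strong Feller argument $Z_{t/2}$ is a path functional, so the H\"older split does not immediately reduce to $p_{t/2}^B$ applied to a bounded function, and one needs the more careful estimate carried out in \cite{CK08}. Since the paper itself defers entirely to that reference, your proposal is already more detailed than what the paper provides, and is aligned with the cited source.
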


\begin{proof}
See \cite[Theorem 3.3]{CK08}.
\end{proof}

Let
\[ \mathbb{M}^{n,\beta}=(\Omega,\mathcal{F},(\mathcal{F}_t)_{t \geq 0}, (X_t)_{t \geq 0}, (\mathbb{P}^{n,\beta}_x)_{x \in E}) \]
be the process of Proposition \ref{propindep} with doubly Feller transition semigroup $(p_t^{n,\beta})_{t >0}$ and Dirichlet form $(\mathcal{E}^{n,\beta},D(\mathcal{E}^{n,\beta}))$ on $L^2(E;m_{n,\beta})$. Denote by $(r_{\lambda}^{n,\beta})_{\lambda>0}$ the resolvent, by $r_{\lambda}^{n,\beta}(x,y)$, $x,y \in E$, $\lambda >0$, its density with respect to $m_{n,\beta}$, by $R_{\lambda}^{n,\beta} \nu$ the $\lambda$-potential of some Borel measure $\nu$ for $\lambda>0$ and by $\mathbb{E}^{n,\beta}_x$ the expectation with respect to $\mathbb{P}^{n,\beta}_x$, $x \in E$. Similarly as before we also write $R_{\lambda}^{\beta}$ instead of $R_{\lambda}^{1,\beta}$ etc. in the case $n=1$. In the following, we introduce a density function $\varrho=\phi^2$. Under suitable conditions on $\phi$ it is possible to perform a Girsanov transformation such that the transition semigroup of the transformed process $\mathbb{M}^{n,\beta,\varrho}$ still possesses the strong Feller property (or even the doubly Feller property). By the preceding section the transformed Dirichlet form is of the form considered in \cite{FGV13}. In this way, we are able to strengthen the results in \cite{FGV13}.

\begin{remark}
For functions $\phi$ such that the conditions of Theorem \ref{thmGT} are fulfilled for $(Z_t)_{t \geq 0}$ as in (\ref{MF}) and $B=E$, we immediately get that the transition function has the doubly Feller property and the process $\mathbb{M}^{n,\beta,\varrho}$ solves (\ref{sde!}) for every starting point in $E$. Unfortunately, we are also interested in densities $\varrho$ such that the corresponding Revuz measure is not of extended Kato class. Such potentials are of particular interest for the application to the so-called wetting model in the theory of stochastic interface models. For this reason, we construct a strong Feller transition semigroup for a larger class of densities using Theorem \ref{thmGT} and an approximation argument. A direct application fails, since the Kato condition on $\mu_{\langle M \rangle}$ ensures that the drift caused by the Girsanov transformation does not "explode". However, this criterion does only take into account the variation of the drift, but not its direction, which is of particular importance in our setting.
\end{remark}

\begin{example} \label{example}
Let $n=1$ and $\phi(x):=\exp(-\frac{1}{4} x^2)$. In this case, $(\ln \phi)^{\prime}(x)=-\frac{1}{2}x$. Hence, we expect that the process $\mathbb{M}^{1,\beta,\phi^2}$ has the representation
\[ dX_t= \sqrt{2}~ \mathbbm{1}_{(0,\infty)}(X_t) dB_t - X_t~ \mathbbm{1}_{(0,\infty)}(X_t) dt + \frac{1}{\beta} \mathbbm{1}_{\{0\}}(X_t) dt. \]
Note that the additional drift term is always non-positive, since $X_t \in [0,\infty)$ for all $t >0$ and thus, it attracts the process to $0$. However, the logarithmic derivative of $\phi$ is unbounded and the energy measure is even not of extended Kato class. Indeed, 
\begin{align*}
R_{\lambda} \mu_{\langle \ln \phi \rangle}(x) &= \int_{[0,\infty)} r^{\beta}_{\lambda}(x,y) d\mu_{\langle \ln \phi \rangle}(y) \\
&=\int_{[0,\infty)} \big( \frac{1}{\sqrt{2\lambda}} (e^{-\sqrt{2\lambda}|x-y|}-e^{-\sqrt{2\lambda}(x+y)}) + \frac{1}{\sqrt{2 \lambda}+\sqrt{2}\beta \lambda} 2 e^{-\sqrt{2 \lambda}(x+y)} \big)~ y^2 dy
\end{align*}
is unbounded in $x$ for each fixed $\lambda >0$, since
\[ \int_{[0,\infty)} \frac{1}{\sqrt{2\lambda}}  e^{-\sqrt{2\lambda}|x-y|}~ y^2 dy = \frac{1}{\lambda} x^2 - \frac{1}{2 \lambda^2} e^{-\sqrt{2 \lambda} x} + \frac{1}{\lambda^2} \rightarrow \infty \ \text{ as } x \rightarrow \infty,
\]
whereas the remaining terms converge to $0$ as $x \rightarrow \infty$. Thus, it is not possible to apply Theorem \ref{thmGT} to this specific choice of $\phi$.
\end{example}

Assume that $\phi$ is given such that Condition \ref{conditions} is fulfilled and additionally $\phi \in C^2(E)$. This additional condition will also be assumed later on. Then, It{\^o}'s formula yields 
\begin{align} \label{AF}
M^{[\ln \phi]}_t= \sqrt{2}~ \sum_{i=1}^n \int_0^t \partial_i \ln \phi(X_s)~ \mathbbm{1}_{(0,\infty)}(X_s^i) dB_s^i,~t \geq 0.
\end{align}
If we do not assume the additional condition $\phi \in C^2(E)$ the same representation follows by an approximation argument. Fix some relatively compact, open subset $G$ of $E$ and let $u \in C^1_c(E) \subset D(\mathcal{E}^{n,\beta})$ be given such that $u=\ln \phi$ on $G$. Moreover, choose a sequence $(u_k)_{k \in \mathbb{N}}$ in $C_c^2(E)$ such that $u_k \rightarrow u$ as $k \rightarrow \infty$ in $C^1$-norm. In particular, the convergence holds with respect to the $\mathcal{E}^{n,\beta}_1$-norm. Then, $M_t^{[\ln \phi]}=M_t^{[u]}$ for $t < \tau_G=\inf \{ t >0|~X_t \notin G\}$, $M_t^{[u]}=\lim_{k \rightarrow \infty} M_t^{[u_k]}$ a.s. for $t \geq 0$ and 
\begin{align*}
M^{[u_k]}_t= \sqrt{2}~ \sum_{i=1}^n \int_0^t \partial_i u_k(X_s)~ \mathbbm{1}_{(0,\infty)}(X_s^i) dB_s^i,~t \geq 0.
\end{align*}
Furthermore, due to the uniform convergence of the sequence $(\partial_i u_k)_{k \in \mathbb{N}}$ and the It{\^o} isometry it follows that
\begin{align*} &\mathbb{E}^{n,\beta}_x \Big( \sum_{i=1}^n \big(  \int_0^t \partial_i u_k(X_s)~ \mathbbm{1}_{(0,\infty)}(X_s^i) dB_s^i - \int_0^t \partial_i u(X_s)~ \mathbbm{1}_{(0,\infty)}(X_s^i) dB_s^i \big)^2 \Big) \\
=&\mathbb{E}^{n,\beta}_x \Big( \sum_{i=1}^n \big( \int_0^t (\partial_i u_k - \partial_i u)(X_s)~ \mathbbm{1}_{(0,\infty)}(X_s^i) dB_s^i \big)^2 \Big) \\
\leq& \mathbb{E}^{n,\beta}_x \Big( \sum_{i=1}^n \int_0^t (\partial_i u_k - \partial_i u)^2(X_s)~ ds \Big) \rightarrow 0 \text{ as } k \rightarrow \infty.
\end{align*}
In particular, it holds a.s. convergence for a subsequence and hence, using $u=\ln \phi$ on $G$ we get 
\[ M_t^{[\ln \phi]}=M_t^{[u]}=\sqrt{2}~\sum_{i=1}^n \int_0^t \partial_i \ln \phi (X_s)~ \mathbbm{1}_{(0,\infty)}(X_s^i) dB_s^i, ~t < \tau_G, \]
$\mathbb{P}^{n,\beta}_x$-a.s. for every $x \in E$.

\begin{example} \label{exbounded} Let $\grad \ln \phi$ additionally be essentially bounded w.r.t. $m_{n,\beta}$. Then $\frac{1}{2} \mu_{\langle \ln \phi \rangle}=\frac{1}{2} \mu_{\langle M^{[\ln \phi]} \rangle}$ is of local and extended Kato class.
\end{example}

Let $k \in \mathbb{N}$ and $K:=[0,k)^n$ as well as $\tau_k:=\inf \{ t >0 |~ X_t \notin K \}$. Let $\phi_k$  be given such that $\phi_k=\phi$ on $K$, Condition \ref{conditions} is fulfilled for $\phi_k$ and $\nabla \ln \phi_k \in L^{\infty}(E;m_{n,\beta})$. We define the exponential functional $(Z^k_t)_{t \geq 0}$ by 
\[ Z^k_t := \exp(M^{[\ln \phi_k]}_t - \frac{1}{2} \langle M^{[\ln \phi_k]} \rangle_t ).\] Note that we are in fact only interested in the restriction of $\phi$ to the set $K$, since the function is used to define a Girsanov transformation of $(p_t^{n,\beta})_{t >0}$ which is killed when leaving $K$. Nevertheless, in order to give meaning to $Z_t^k$ for $t \geq \tau_k$, we extend $\phi_k$ to $E$.

\begin{theorem} \label{strongfeller}
Let $\varrho=\phi^2$ be given as in Condition \ref{conditions} and $Z_t=\exp(M_t^{[\ln \phi]}- \frac{1}{2} \langle M^{[\ln \phi]} \rangle_t)$, $t \geq 0$. Then the transition function $(p^{n,\beta,\varrho}_t)_{t \geq 0}$ defined by $p^{n,\beta,\varrho}_tf(x)=\mathbb{E}^{n,\beta}_x(Z_tf(X_t))$ for $f \in \mathcal{B}_b(E)$ and $x \in E$ which corresponds to the strong Markov process $\mathbb{M}^{n,\beta,\varrho}$ has the strong Feller property.
\end{theorem}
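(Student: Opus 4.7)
My plan is a localization argument that reduces the theorem to a setting where Theorem~\ref{thmGT} applies. For each $k\in\mathbb{N}$ set $K=[0,k)^n$. By continuity of the paths of $\mathbb{M}$, any starting point $x\in E\setminus K$ satisfies $\mathbb{P}_x(\tau_k=0)=1$, so $K$ is a regular open subset of $E$ in the sense required by Theorem~\ref{thmGT}. Using (\ref{condlnphi}) and a smooth cutoff on $E$, I would pick $\phi_k$ such that $\phi_k=\phi$ on $K$, Condition~\ref{conditions} still holds for $\phi_k$, and additionally $\nabla\ln\phi_k\in L^\infty(E;\mu_n)$. Example~\ref{exbounded} then shows that $\tfrac12\mu_{\langle\ln\phi_k\rangle}$ is of local and extended Kato class, so Theorem~\ref{thmGT} applies to the martingale $M^{[\ln\phi_k]}$ on the regular open set $K$, yielding that
\[
\tilde p_t^K f(x)\;:=\;\mathbb{E}_x\bigl(Z_t^k\,f(X_t)\,\mathbbm{1}_{\{t<\tau_k\}}\bigr),\qquad f\in\mathcal{B}_b(E),\ x\in K,
\]
has the strong Feller property on $K$.

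The next step is to identify $\tilde p_t^K$ with a restriction of $p_t$. From~(\ref{AF}) and~(\ref{variationAF}) the additive functionals $M^{[\ln\phi]}_t$ and $\langle M^{[\ln\phi]}\rangle_t$ are constructed from integrals of $\partial_i\ln\phi$ along the trajectory $(X_s)_{s\le t}$. On the event $\{t<\tau_k\}$ the entire path lies in $K$, where $\phi=\phi_k$, so these functionals coincide with their counterparts for $\phi_k$ and hence $Z_t=Z_t^k$ on $\{t<\tau_k\}$. This gives $\tilde p_t^K f(x)=\mathbb{E}_x(Z_t\,f(X_t)\,\mathbbm{1}_{\{t<\tau_k\}})$ and therefore the decomposition
\[
p_t f(x)\;=\;\tilde p_t^K f(x)\;+\;\mathbb{E}_x\bigl(Z_t\,f(X_t)\,\mathbbm{1}_{\{t\ge\tau_k\}}\bigr),\qquad x\in K.
\]

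Finally I would let $k\to\infty$. The remainder is dominated by $\|f\|_\infty\,\mathbb{E}_x(\mathbbm{1}_{\{\tau_k\le t\}}Z_t)$, which by assumption~(\ref{condZ}) tends to zero uniformly on every compact $D\subset E$. Since any such $D$ is contained in $K$ for all large $k$, and $\tilde p_t^K f$ is continuous on $K$, $p_tf$ is a uniform-on-$D$ limit of continuous functions and hence continuous on $D$; local compactness of $E$ then gives continuity on all of $E$. Boundedness follows from $Z_t$ being a non-negative local martingale and therefore a supermartingale, so $\mathbb{E}_x(Z_t)\le 1$ and $\|p_tf\|_\infty\le\|f\|_\infty$. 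Together this yields $p_t(\mathcal{B}_b(E))\subset C_b(E)$.

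The main obstacle, as Example~\ref{example} illustrates, is that $\tfrac12\mu_{\langle\ln\phi\rangle}$ need not be of extended Kato class, so Theorem~\ref{thmGT} cannot be applied directly to $\phi$. Hypothesis~(\ref{condZ}) is the delicate ingredient that replaces the Kato-class assumption by uniform control, on compact sets of starting points, of the contribution of paths that leave the cutoff region $K$ before time $t$; this is the only nontrivial point in the passage to the limit.
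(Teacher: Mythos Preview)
Your proposal is correct and follows essentially the same localization argument as the paper: approximate by $\phi_k$ with bounded logarithmic gradient on $K=[0,k)^n$, apply Theorem~\ref{thmGT} and Example~\ref{exbounded} to get the doubly Feller property for the killed semigroup, and use (\ref{condZ}) to make the remainder vanish uniformly on compacts. You are in fact slightly more explicit than the paper in justifying $Z_t=Z_t^k$ on $\{t<\tau_k\}$ and in deriving boundedness of $p_tf$ via the supermartingale property, both of which the paper uses but does not spell out.
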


\begin{proof}
$K$ is regular, i.e., $\mathbb{P}^{n,\beta}_x(\tau_k =0)=1$ for each $x \in E \backslash K$, since $\mathbb{M}^{n,\beta}$ has continuous sample paths. We define the transition function $(p^k_t)_{t \geq 0}$ similar as $(p^{n,\beta,\varrho}_t)_{t \geq 0}$ by $p_t^k f(x):=\mathbb{E}^{n,\beta}_x(Z_t^k f(X_t) \mathbbm{1}_{\{t <\tau_k\}})$. By the assumptions on $\phi_k$, Example \ref{exbounded} and Theorem \ref{thmGT}, $(p_t^k)_{t \geq 0}$ has the doubly Feller property for each $k>0$. Let $f \in \mathcal{B}_b(E)$ and choose a constant $C(f) < \infty$ such that $|f(x)| \leq C(f)$ for all $x \in E$. Clearly, $p^{n,\beta,\varrho}_tf \in \mathcal{B}_b(E)$. Hence, it suffices to show the continuity of $p^{n,\beta,\varrho}_tf$. We have for $x \in D:=[0,d]^n$, $d>0$,
\begin{align*}
| p^{n,\beta,\varrho}_tf(x) - p_t^kf(x)| &=| \mathbb{E}^{n,\beta}_x(Z_t f(X_t)) -\mathbb{E}^{n,\beta}_x(Z_t^k f(X_t) \mathbbm{1}_{\{t < \tau_k\}}) | \\
&=|\mathbb{E}^{n,\beta}_x(Z_t f(X_t) \mathbbm{1}_{\{ t \geq \tau_k\}})| \\
&\leq C(f)~ |\mathbb{E}^{n,\beta}_x(Z_t \mathbbm{1}_{\{ t \geq \tau_k\}})| \\
&\leq C(f)~ \sup_{x \in D} |\mathbb{E}^{n,\beta}_x(Z_t \mathbbm{1}_{\{ t \geq \tau_k\}})|
\rightarrow 0 \quad \text{as } k \rightarrow \infty.
\end{align*}
uniformly on $D$ by (\ref{condZ}). Hence, $p^{n,\beta,\varrho}_t f$ is continuous on $D$ for each $d >0$ and so $p^{n,\beta,\varrho}_t f \in C_b(E)$.
\end{proof}

\begin{remark} \label{remZ}
Let $D \subset E$ be compact. Then $\lim_{k \rightarrow \infty} \sup_{x \in D} ~\mathbb{E}^{n,\beta}_x(\mathbbm{1}_{\{ \tau_k \leq t \}}~Z_t) =0$ holds for example if there exists some $p>1$ such that $\sup_{x \in D} \mathbb{E}^{n,\beta}_x(Z_t^p) < \infty$. Indeed, let $1<q<\infty$ such that $\frac{1}{p}+\frac{1}{q}=1$. Then 
\[ \sup_{x \in D} \mathbb{E}^{n,\beta}_x(\mathbbm{1}_{\{\tau_k \leq t\}} Z_t) \leq \sup_{x \in D} \mathbb{E}^{n,\beta}_x(Z_t^p)^{\frac{1}{p}}~ \sup_{x \in D} \big(\mathbb{P}^{n,\beta}_x(\tau_k \leq t)\big)^{\frac{1}{q}}.\]
Define $C_t:= \max_{i=1,\dots,n} \max_{0 \leq s \leq t} X_s^i$ for $t \geq 0$. Then for $x \in D$ and $k >d$
\[ \mathbb{P}^{n,\beta}_x( \tau_k \leq t) \leq \mathbb{P}^{n,\beta}_0( C_t \geq k-d ) \leq n~ \sqrt{\frac{t}{2 \pi}} \frac{4}{k-d} \exp(-\frac{(k-d)^2}{2t}) =: C(k) \rightarrow 0 \ \text{  as } k \rightarrow \infty \]
due to \cite[p.96,(8.3)']{KS98}, since the new time scale $(\tau(t))_{t \geq 0}$ fulfills $\tau(t) \leq t$ for every $t \geq 0$ and hence, $C_t \leq \max_{i=1,\dots,n} \max_{0 \leq s \leq t} |B_s^i|$ almost surely with respect to $\mathbb{P}^{n,\beta}_0$.
\end{remark}

\begin{proof}[Proof of Theorem \ref{thmmain1}]
By Section \ref{secgirsanov} there exists a strong Markov process $\mathbb{M}^{n,\beta,\varrho}$ with transition semigroup $(p^{n,\beta,\varrho}_t)_{t \geq 0}$ and the Dirichlet form associated to $\mathbb{M}^{n,\beta,\varrho}$ is given by the closure of $(\mathcal{E}^{n,\beta,\varrho},\mathcal{D})$ on $L^2(E;\varrho m_{n,\beta})$. Note that in this case $\mathcal{D} \cap D(L) \supset C_c^2(E)$ and $C_c^2(E)$ is also dense in $D(\mathcal{E}^{n,\beta})$. Indeed, Lemma \ref{lemdense} is based on the fact that $C_c^1([0,\infty))$ is dense for the one dimensional form which also holds for $C_c^2([0,\infty))$ (and even $C_c^{\infty}([0,\infty))$) by \cite[Theorem 5.2.8(i)]{ChFu11}. The strong Feller property is shown in Theorem \ref{strongfeller} and the last statement holds by \cite[Exercise 4.2.1]{FOT94}.
\end{proof}

\begin{proof}[Proof of Theorem \ref{thmmain2}]
The statement follows by the results proven in \cite[Corollary 4.18, Theorem 5.6]{FGV13} considering that the absolute continuity condition \cite[(4.2.9)]{FOT94} is fulfilled.
\end{proof}

\begin{remark} \label{remunique}
Assume additionally that
 \[ \mathbb{E}^{n,\beta}_x \big( \exp( \sum_{i=1}^n \int_0^t (\partial_i \ln \phi(X_s))^2~\mathbbm{1}_{(0,\infty)}(X_s^i)~ds) \big) < \infty\]
for every $x \in E$ and $t>0$. Then, the solution to (\ref{main}) is even unique in law in view of \cite[Chapter IV, Theorem 4.2]{WaIk89}.
\end{remark}

\section{Application to the dynamical wetting model} \label{appl}

\subsection{Densities corresponding to potential energies} \label{subsecPE}

In the following, let $\phi \in C^2(E)$ be strictly positive such that $\phi_{|E_+(B)} \in H^{1,2}(E_+(B))$ for every $\emptyset \neq B \subset I$. Set $H:=- \ln \phi$ (thus, $\phi=\exp(-H)$) and assume additionally that there exist real constants $K_1 \geq 0,~K_2$ and $K_3$ such that
\begin{enumerate}
\item $H(x) \geq -K_1$ for all $x \in E$,
\item $\partial_i H(x) \leq K_2$ for all $x \in \{ x_i=0\}:=\{ x \in E|~x_i=0\}$, $i=1,\dots,n$,
\item $\partial_i^2 H(x) \leq K_3$ for all $x \in E$, $i=1,\dots,n$.
\end{enumerate}
If we can verify (\ref{condZ}), Condition \ref{conditions} is fulfilled and thus, the results of Theorem \ref{thmmain1} and Theorem \ref{thmmain2} hold accordingly.\\

Using (\ref{AF}) and It{\^o}'s formula we see that
\begin{align}
M_t^{[\ln \phi]} - \frac{1}{2} \langle M^{[\ln \phi]} \rangle_t &= \sqrt{2} \sum_{i=1}^n \int_0^t \partial_i \ln \phi (X_s) \mathbbm{1}_{(0,\infty)} (X_s^i) dB_s^i -\sum_{i=1}^n \int_0^t \big(\partial_i \ln \phi(X_s) \big)^2 \mathbbm{1}_{(0,\infty)}(X_s^i) ds \notag \\
&=H(X_0)-H(X_t)+ \frac{1}{\beta} \sum_{i=1}^n \int_0^t \partial_i H(X_s) \mathbbm{1}_{\{0\}}(X_s^i) ds  \label{AFdecomp} \\
&\ \ + \sum_{i=1}^n \int_0^t \big( \partial_i^2 H(X_s) - \partial_i H(X_s)^2 \big) \mathbbm{1}_{(0,\infty)}(X_s^i) ds \notag \\
&\leq H(x) + K_1 + \frac{n}{\beta} K_2 t + n  K_3 t \notag
\end{align}
$\mathbb{P}^{n,\beta}_x$-a.s. for each $x \in E$.\\
Let $p>1$ be arbitrary, $D \subset E$ compact. Then it holds
\[ \sup_{x \in D}~ \mathbb{E}^{n,\beta}_x(Z_t^p) \leq \exp\big(p~( \sup_{x \in D} H(x) + K_1 + \frac{n}{\beta} K_2 t + n  K_3 t)\big) < \infty \quad \text{for every } t>0.\]
Thus, in view of Remark \ref{remZ}, (\ref{condZ}) holds true.

\subsection{Densities corresponding to potential energies given by pair potentials}

Assume that $H$ is given by a potential with nearest neighbor pair interaction, i.e., $H$ is defined as in (\ref{hamilt}). In particular, $\kappa:=\int_{\mathbb{R}} \exp(-V(r)) dr < \infty$, $V$ is convex, $V^{\prime}(0)=0$ and $V^{\prime}$ is non-decreasing. Then, we have $H(x) \geq -K_1$, $K_1 \geq 0$, since $V$ is bounded from below. Moreover,
\[ \partial_i H (x)= \frac{1}{2} \sum_{\stackunder{|i-j|=1}{j \in \{0,\dots,n+1\}}} V^{\prime}(x_i-x_j) \ \ \big(= \frac{1}{2} V^{\prime}(x_i-x_{i-1}) + \frac{1}{2} V^{\prime}(x_i-x_{i+1}) \big)
\]
for $i=1,\dots,n$ and
\[ \partial^2_i H (x)=\frac{1}{2} \sum_{\stackunder{|i-j|=1}{j \in \{0,\dots,n+1\}}} V^{\prime \prime}(x_i-x_j) \ \ \big(= \frac{1}{2} V^{\prime \prime}(x_i-x_{i-1}) + \frac{1}{2} V^{\prime \prime}(x_i-x_{i+1}) \big).
\]
Since $\partial_i H (x)=\frac{1}{2} (V^{\prime}(-x_{i-1}) + V^{\prime}(-x_{i+1})) \leq 0$ if $x_i=0$, we get $\partial_i H(x) \leq 0$ for all $x \in \{ x_i=0\}$ and furthermore, $\partial_i^2 H(x) \leq K_3$ , $i=1,\dots,n$, since $V^{\prime \prime}$ is bounded by a constant $c_{+}$. Thus, (i)-(iii) of Section \ref{subsecPE} are fulfilled. Furthermore, the condition $\phi_{|E_+(B)} \in H^{1,2}(E_+(B))$ for every $\emptyset \neq B \subset I$ with $\phi=\exp(-H)$ is satisfied in view of \cite[Remark 6.3, Remark 6.4]{FGV13}.

\subsection*{Acknowledgment}

We thank Torben Fattler for helpful comments and discussions. R.~Vo{\ss}hall gratefully acknowledges financial support in the form of a fellowship of the German state Rhineland-Palatine. Moreover, we thank an anonymous referee for helpful comments improving the readability of the paper.

\end{document}